\newcommand{\Id}{\mathrm{Id}}
\newcommand{\End}{\mathrm{End}}
\newcommand{\Stab}{\mathrm{Stab}}
\newcommand{\Spec}{\mathrm{Spec}}
\newtheorem{theorem}{Theorem}
\newtheorem{proposition}[theorem]{Proposition}
\newtheorem{lemma}[theorem]{Lemma}
\newtheorem{corollary}[theorem]{Corollary}
\newenvironment{remark} 
{\par\textbf{Remark}\begin{itshape}\par}
{\end{itshape}}%
\newenvironment{definition} 
{\par\textbf{Definition}\begin{itshape}\par}
{\end{itshape}}%
\par\textbf{Examples}\begin{itshape}\par}
\title[matrix units for $S_d$ and unitary integration]{Matrix Units in the Symmetric Group Algebra, and Unitary Integration }
\author{Timothy Cioppa}
\address{T.C: D\'epartement de Math\'ematique et Statistique, Universit\'e d'Ottawa,
585 King Edward, Ottawa, ON, K1N6N5 Canada}
\email{jciop085@uottawa.ca}
\author{Beno\^\i{}t Collins}
\address{B.C: D\'epartement de Math\'ematique et Statistique, Universit\'e d'Ottawa,
585 King Edward, Ottawa, ON, K1N6N5 Canada,
WPI AIMR, Tohoku, Sendai, 980-8577 Japan
and 
CNRS, Institut Camille Jordan Universit\'e  Lyon 1,
France}
\email{bcollins@uottawa.ca}
\begin{document}

\maketitle

\begin{abstract}

In this paper, we establish an explicit isomorphism between the symmetric group algebra 
$\mathbb{C}[S_d]$ and the path algebra of the Young graph $\mathbb{Y}_d$, by expressing the matrix elements
$E_{T,S}^{\lambda}$ as a linear combination of group elements.

We then investigate applications of this result. As a main application, we obtain new
formulas, alternative to Weingarten calculus, to integrate polynomials with respect to
the Haar measure on the unitary group.
In particular, we obtain a closed formula for the law of moments of the first $k$ rows of
the unitary group $U_n$, uniform in $n\geq k$.

\end{abstract}

\section{Introduction}
\label{sec:Introduction}

A fundamental result from representation theory is that the group algebra of a finite 
group $G$ is isomorphic to a matrix algebra (see \cite{Tolli}).  In particular, a group algebra is spanned by a family of matrix units $E_{T,S}^{\lambda}$ where $\lambda \in \hat{G}$ and for which the 
multiplication follows the normal rules: 
$$E_{T,S}^{\lambda}E_{RM}^{\beta} = \delta_{S,R}\delta_{\lambda,\beta}E_{T,M}^{\lambda}.$$ 
It is a difficult problem in general to compute these matrix elements 
in terms of the abstract elements of the group algebra.  In this paper, we provide such a formula for the symmetric group algebra, and discuss some applications of these formula.  

Our first main result can be stated as follows:  the elements 
$E_{T,S}^{\lambda} := E_T^{\lambda}\sigma_{S,T}E_S^{\lambda}$ form,
up to a non-zero multiplicative factor,
a set of matrix units for the group algebra $\mathbb{C}[S_d]$, where $T$ and $S$ 
denote standard fillings of the Young diagram $\lambda$, 
$E_T^{\lambda}$ is a shorthand notation for $E_{TT}^{\lambda}$, 
and 
$\sigma_{S,T}$ is the unique permutation transforming $S$
into $T$.
Using available formulas for $E_T^{\lambda}$ (\cite{Tolli}, Theorem 3.4.11) 
we obtain explicit formulas for the matrix units as linear combinations of permutations. 
Similar formulas have been obtained in \cite{Ram} 
in a more general setup, in a recursive way, rather than directly as a combination of group elements.

We will discuss two applications of these formulas. The first result concerns formulas for inclusion of matrix 
units $E_{T,S}^{\lambda} \in \mathbb{C}[S_d] \subset \mathbb{C}[S_{d+1}]$, and conversely formulas for 
the projection of matrix units in $\mathbb{C}[S_{d+1}]$ onto $\mathbb{C}[S_d]$. Not surprisingly, these 
formulas depend of the structure of the Young graph, which encodes complete information on the 
irreducible representations of the symmetric groups of all order. 

A second and more important byproduct is an application to the theory of integration over the unitary group
with respect to the Haar measure.

An
explicit formula for the integral of
a polynomial against the Haar measure
has remained elusive until quite recently (see \cite{co,cs1}). 
We propose here a completely new method for 
performing this task.
Our main result can be stated as follows:

\begin{theorem}
\label{theorem1}
For all $d$-tuples of indices $I=(i_1,\ldots ,i_d),J=(j_1,\ldots , j_d ),K=(k_1,\ldots , k_d),L=(l_1,\ldots , l_d)$, 
the following formula holds true:
$$\int_{U_n}u_{i_1j_1}\ldots u_{i_dj_d}\overline u_{k_1l_1}\ldots \overline u_{k_dl_d}d\mu (U) 
=  \sum_{\lambda \vdash d ; S,T\in Stab(\lambda), l(\lambda) \leq n} {\dfrac{\langle e_{J,L},E_{S,T}^{\lambda}\rangle \langle E_{S,T}^{\lambda},e_{I,K} \rangle}{||E_{S,T}^{\lambda}||^{2}}},$$
where
$E_{S,T}^{\lambda}$ are 
multiples of
matrix units in $\mathbb{C}_n[S_d] \subset M_n^{\otimes d}$, as defined in Theorem \ref{theorem4}, and $e_{I,J} = e_{i_1, j_1} \otimes \dots \otimes e_{i_d, j_d}$ are the standard matrix units in $M_n^{\otimes d}$.
\end{theorem}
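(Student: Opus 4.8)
The plan is to recognize the left-hand side as a single matrix coefficient of the Haar-averaging (twirling) operator on $M_n^{\otimes d}$ and then to diagonalize that operator using Schur--Weyl duality. Identify $M_n^{\otimes d}$ with $\End((\bC^n)^{\otimes d})$, so that the standard matrix unit $e_{J,L}$ becomes the rank-one operator $|e_J\rangle\langle e_L|$, where $e_J = e_{j_1}\otimes\cdots\otimes e_{j_d}$. A direct computation with the conjugation map $\Phi_U \colon X \mapsto U^{\otimes d}\, X\, (U^{\otimes d})^{*}$ gives
$$\langle e_{I,K},\ \Phi_U(e_{J,L})\rangle = \langle e_I, U^{\otimes d} e_J\rangle\,\overline{\langle e_K, U^{\otimes d} e_L\rangle} = u_{i_1 j_1}\cdots u_{i_d j_d}\,\overline{u}_{k_1 l_1}\cdots \overline{u}_{k_d l_d},$$
where $\langle\cdot,\cdot\rangle$ is the Hilbert--Schmidt inner product. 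First I would integrate this identity over $U_n$: since everything is finite dimensional, the integral passes inside the inner product, and the left-hand side of the theorem becomes the matrix coefficient $\langle e_{I,K},\ P(e_{J,L})\rangle$ of the averaged operator $P := \int_{U_n}\Phi_U\, d\mu(U)$.

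Next I would establish two structural facts about $P$. Using invariance of Haar measure under left translation and under inversion, $P$ is idempotent and self-adjoint for the Hilbert--Schmidt inner product, hence an orthogonal projection. Its range is the fixed-point set $\{X : U^{\otimes d} X = X\, U^{\otimes d}\ \forall U\in U_n\}$, i.e. the commutant of the $U_n$-action on $(\bC^n)^{\otimes d}$; by Schur--Weyl duality this commutant is exactly the image $\mathbb{C}_n[S_d]$ of the symmetric group algebra in $M_n^{\otimes d}$. Thus $P$ is the orthogonal projection onto $\mathbb{C}_n[S_d]$.

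Then I would expand $P$ in the basis of matrix units supplied by Theorem~\ref{theorem4}. The elements $E_{S,T}^{\lambda}$, being multiples of matrix units, are pairwise orthogonal in Hilbert--Schmidt norm: from $(E_{S,T}^{\lambda})^{*}=E_{T,S}^{\lambda}$ and the multiplication rule one gets $\langle E_{S,T}^{\lambda}, E_{S',T'}^{\lambda'}\rangle$ proportional to $\delta_{\lambda\lambda'}\delta_{SS'}\delta_{TT'}$. Crucially, under the representation $\bC[S_d]\to M_n^{\otimes d}$ the blocks indexed by $\lambda$ with $l(\lambda)>n$ map to zero, so the surviving orthogonal basis of $\mathbb{C}_n[S_d]$ is $\{E_{S,T}^{\lambda} : l(\lambda)\le n\}$. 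For any orthogonal basis the orthogonal projection reads $P(X)=\sum \langle E_{S,T}^{\lambda},X\rangle\,\|E_{S,T}^{\lambda}\|^{-2}\,E_{S,T}^{\lambda}$; note that each summand is insensitive to the scaling of $E_{S,T}^{\lambda}$, which is exactly why multiples of matrix units suffice. Substituting $X=e_{J,L}$, pairing against $e_{I,K}$, and using self-adjointness of $P$ then yields precisely the claimed symmetric sum, up to the placement of complex conjugates fixed by the inner-product convention.

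The main obstacle is the middle step: pinning down that the Haar twirl is the orthogonal projection onto $\mathbb{C}_n[S_d]$ and, in tandem, that the rescaled matrix units genuinely form an \emph{orthogonal} Hilbert--Schmidt basis of that image with the correct index set $l(\lambda)\le n$. The Schur--Weyl identification of the commutant and the vanishing of the high-$\lambda$ blocks are the facts that must be invoked carefully; once they are in place, the remainder is the standard expansion of an orthogonal projection in an orthogonal basis, together with the bookkeeping of conjugates.
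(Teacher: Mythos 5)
Your proposal is correct and follows essentially the same route as the paper: rewriting the integrand as a Hilbert--Schmidt matrix coefficient of the twirl $X \mapsto U^{\otimes d} X\, U^{*\otimes d}$ (the paper's Lemma~\ref{lemma2}), identifying the Haar average with the orthogonal projection $\mathbb{E}$ onto $\mathbb{C}_n[S_d]$ via Schur--Weyl duality (Proposition~\ref{prop4}), and expanding $\mathbb{E}$ in the orthogonal basis of rescaled matrix units $E_{S,T}^{\lambda}$ with $l(\lambda)\le n$ (Proposition~\ref{prop5}). Your explicit verification that the twirl is idempotent and self-adjoint, and that the $E_{S,T}^{\lambda}$ are pairwise Hilbert--Schmidt orthogonal with the $l(\lambda)>n$ blocks killed by $p_n^d$, simply makes precise the steps the paper delegates to those two propositions.
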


The paper is organized as follows: in section ~\ref{sec:Reminder} 
we begin with a review of the necessary background concerning Young diagrams, the Young graph $\mathbb{Y}$, and the representation theory of the symmetric groups $S_d$, $d \geq 1$.

In section \ref{sec:matrixunits}
we derive explicit formulas for the matrix units $E_{T,S}^{\lambda} \in \mathbb{C}[S_d]$, and describe the behaviour of these matrix units with respect to projection and inclusion between symmetric groups of different orders.

Finally, in section \ref{sec:integrals}, we show how these matrix formulas can be used in the calculation of polynomial integrals over the 
unitary group $U_n \subset GL_n(\mathbb{C})$ with respect to the Haar measure $\mu = \mu_n$.

\section{Reminder: Young Diagrams and Irreducible $S_d$ Modules}
\label{sec:Reminder}

In order to construct matrix elements for $\mathbb{C}[S_d]$, we first need to recall the construction of 
irreducible $S_d$ modules. We follow the lines of 
Vershik and Okounkov in \cite{Vershik}, which contains all the details of this section. The representation 
theory of the symmetric groups relies primarily on the combinatorial structure of Young diagrams. 

A \emph{Young diagram} $\lambda$ is defined as 
a finite non-increasing sequence of integers $\lambda_1\geq \ldots \geq \lambda_k > 0$. 
The \emph{size} of the diagram $\lambda$ is given by $|\lambda| = \sum_i \lambda_i$, and we 
use the notation $\lambda \vdash d$ to mean $\lambda$ is a Young diagram of size $d$. The number $k$ is known as the \emph{length of $\lambda$} and is denoted $l(\lambda)$. Young diagrams have a useful graphical representation as a collection of boxes, with three conventional representations shown below (English, French, and Russian). 
For example, the graphical representation of the diagram $\lambda = (4,3,3,2,1)$ is shown below. 

\begin{figure}
	\begin{center}
		\includegraphics[scale=0.8]{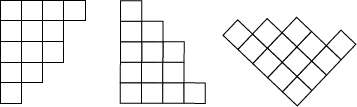}
		\caption{Figure 1: three well known ways for graphically representing a Young diagram.}
		\label{fig1}
	\end{center}
\end{figure}

In what follows, we
adopt the English convention for drawing Young diagrams. The Young graph $\mathbb{Y}$ 
(the first 4 levels of which is shown below in figure \ref{fig2}) 
 is the infinite directed graph whose vertex set is the set of all Young diagrams, and where a diagram of size 
 $d$ is connected to one of size $d+1$ if the two differ by exactly one box. The truncated Young graph $\mathbb{Y}_d$ consists only of the first $d$ levels of $\mathbb{Y}$. Given any diagram 
 $\lambda$, we can consider the set  
 $\Stab(\lambda )$
 of all paths in the Young graph starting at the unique block of size $1$ and ending at $\lambda$. 
 Equivalently, such a path in the Young graph corresponds in a natural way to a filling of the boxes of 
 $\lambda$ with the numbers $1,\dots, d$ so that the numbers are increasing along every row and column 
 of the diagram. Such a filling is called \emph{standard}, which explains the usage of $\Stab(\lambda$), 
 denoting the set of standard tableaux. To each standard filling $T$ of the diagram $\lambda$ of size $d$,  
 we can associate the \emph{content vector} $c(T) = (a_1(T),\dots, a_d(T))$, where $a_i(T)$ is the difference between the $x$ and $y$ coordinates of the $i^{th}$ box added according to the filling $T$. 
 This vector encodes important information about the representations of $S_d$ which we will discuss later. 

\begin{figure}
	\begin{center}
		\includegraphics[scale=0.8]{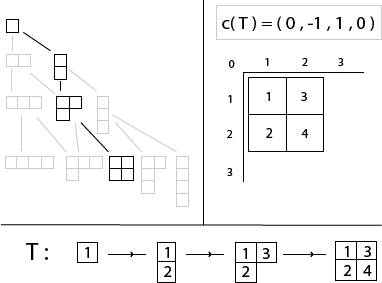}
		\caption{Figure 2: The first four levels of $\mathbb{Y}$}
		\label{fig2}
	\end{center}
\end{figure}

Using the Young graph we can construct all irreducible $S_d$ modules as follows: 
let $V_{\lambda}$ be the free complex vector space with orthonormal basis consisting of elements 
$w_T$ where $T \in \Stab(\lambda)$. We define the action of $S_d$ on this space by reducing to the 
case of Coxeter transpositions $s_i=(i,i+1)$, which generate the symmetric group $S_d$. 
If $s_i$ is a Coxeter transposition, and $T\in \Stab(\lambda)$, we consider the following two cases:
\begin{enumerate}
	\item If the filling $s_iT$ (which is obtained by replacing $k$ with $s_i(k)$ in the filled diagram 
	$\lambda$) is not standard, define $s_i(w_T)$ to be $w_T$ if $i$ and $i+1$ lie in the same row of $T$, 
	and -$w_T$ if they lie in the same column of $T$. 
	\item If the filling $S=s_iT$ is also standard, define $s_i(w_T) = \dfrac{1}{r}w_T+\sqrt{1-\dfrac{1}{r^2}}w_S$ 
	and $s_i(w_S) = \sqrt{1-\dfrac{1}{r^2}}w_T-\dfrac{1}{r}w_S$, where r is the \emph{axial distance} defined by $r=r_i(T)= a_{i+1}(T)-a_i(T)$.
\end{enumerate}

The following result can be found in \cite{Vershik}.

\begin{theorem}
\label{theorem2}
Indexing over all partitions $\lambda$ of $d$ boxes, the set of $V_{\lambda}$ defined above constitute a 
full set of pairwise non isomorphic irreducible representations of $S_d$. Hence, we have a $*-$algebra isomorphism 
$\mathbb{C}[S_d] = \bigoplus_{\lambda \vdash d} \End(V_{\lambda})$.
\end{theorem}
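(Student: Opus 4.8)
The plan is to follow the Vershik--Okounkov strategy, organized into four steps: verify that the given formulas define an honest representation, diagonalize the Young--Jucys--Murphy elements to read off the joint spectrum, deduce irreducibility and pairwise non-isomorphism from that spectrum, and finally close the argument with a dimension count and the Artin--Wedderburn theorem. Since $\mathbb{C}[S_d]$ is the group algebra of a finite group, Maschke's theorem guarantees that it is semisimple, and the antilinear involution $g \mapsto g^{-1}$ makes it a $*$-algebra; the $*$-structure will come along for free once the orthogonality of the action is noted.

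First I would check that the assignment $s_i \mapsto s_i|_{V_{\lambda}}$ respects the Coxeter presentation of $S_d$, so that it extends to an algebra homomorphism $\mathbb{C}[S_d] \to \End(V_{\lambda})$. The relation $s_i^2 = 1$ is immediate: in the first case of the construction $s_i$ acts as $\pm 1$ on $w_T$, and in the second case it acts on $\Span\{w_T, w_S\}$ by a symmetric matrix whose square is the identity, by the identity $(1/r)^2 + (1 - 1/r^2) = 1$. The commutation $s_i s_j = s_j s_i$ for $|i-j| \geq 2$ holds because $s_i$ and $s_j$ move disjoint pairs of boxes and therefore act on independent coordinates of the basis $\{w_T\}$. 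The braid relation $s_i s_{i+1} s_i = s_{i+1} s_i s_{i+1}$ is the genuine computational obstacle: it requires a case analysis over the mutual positions of the boxes labelled $i, i+1, i+2$ in a standard filling, verifying the matching of the resulting $2\times 2$ and $3 \times 3$ blocks built from the axial distances $r_i(T)$. This is where the specific numerical form of the coefficients $1/r$ and $\sqrt{1 - 1/r^2}$ is forced, and it is the heart of the well-definedness.

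Next I would introduce the Young--Jucys--Murphy elements $X_k = \sum_{j < k}(j,k) \in \mathbb{C}[S_d]$ and show, by induction on $k$ using the recursion $X_{k+1} = s_k X_k s_k + s_k$ together with the explicit action of $s_k$, that each $w_T$ is a joint eigenvector, $X_k w_T = a_k(T)\, w_T$, so that the content vector $c(T)$ records the full joint spectrum of $(X_1, \dots, X_d)$ on $w_T$. Because a standard filling is reconstructible from its content vector, the vectors $c(T)$ are pairwise distinct as $T$ ranges over $\bigcup_{\lambda \vdash d}\Stab(\lambda)$; in particular the $X_k$ act with simple joint spectrum on each $V_{\lambda}$. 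This simplicity gives irreducibility: any nonzero invariant subspace $W$ is stable under the commutative algebra generated by the $X_k$, hence is spanned by a subset of the $\{w_T\}$, but the second case of the construction shows that for $S = s_i T$ standard one has $s_i w_T = (1/r)w_T + \sqrt{1-1/r^2}\, w_S$ with nonzero $w_S$-component (the coefficient $\sqrt{1-1/r^2}$ being nonzero since $|r| \geq 2$ whenever $s_i T$ is standard), and the graph on $\Stab(\lambda)$ with an edge $T \sim s_i T$ whenever $s_i T$ is standard is connected. Hence $w_T \in W$ forces every $w_S \in W$, so $W = V_{\lambda}$. The same distinctness of content vectors shows that $V_{\lambda}$ and $V_{\mu}$ for $\lambda \neq \mu$ have disjoint $X$-spectra and are therefore non-isomorphic.

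Finally I would count. Each $V_{\lambda}$ has dimension $f^{\lambda} = |\Stab(\lambda)|$, and the classical identity $\sum_{\lambda \vdash d}(f^{\lambda})^2 = d! = \dim_{\mathbb{C}} \mathbb{C}[S_d]$ shows that the irreducibles just produced already exhaust the total dimension available. Equivalently, the number of the $V_{\lambda}$ equals the number of partitions of $d$, which is the number of conjugacy classes of $S_d$, hence the number of inequivalent complex irreducibles; so $\{V_{\lambda} : \lambda \vdash d\}$ is a complete set. Artin--Wedderburn for the semisimple algebra $\mathbb{C}[S_d]$ then yields $\mathbb{C}[S_d] \cong \bigoplus_{\lambda \vdash d}\End(V_{\lambda})$, and since the matrices defining the $s_i$ are real orthogonal the representations are unitary, so this isomorphism intertwines the involution $g \mapsto g^{-1}$ with the adjoint on each $\End(V_{\lambda})$, making it a $*$-isomorphism as claimed.
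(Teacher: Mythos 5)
Your proposal is correct, but note that the paper itself does not prove Theorem \ref{theorem2} at all --- it defers entirely to \cite{Vershik} --- and your argument inverts the logic of that cited treatment. In the Vershik--Okounkov approach the action formulas (1)--(2) are not posited and verified but \emph{derived}: one first proves that the Gelfand--Tsetlin algebra is maximal abelian and generated by the Young--Jucys--Murphy elements, analyzes the possible joint spectra (the set $\Spec(d)$), identifies spectrum vectors with content vectors of standard tableaux via the action of admissible transpositions on spectra, and only then obtains Young's orthogonal form as a consequence, with irreducibility and completeness falling out of the general theory of multiplicity-free branching for the chain $\mathbb{C}[S_1]\subset\mathbb{C}[S_2]\subset\cdots$. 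You instead take the classical ``Young's orthogonal form'' route: posit the matrices, verify the Coxeter presentation directly (your identification of the braid relation $s_is_{i+1}s_i=s_{i+1}s_is_{i+1}$ as the genuine computational heart is accurate, and that finite case analysis over the relative positions of the boxes $i,i+1,i+2$ does close), then diagonalize via the recursion $X_{k+1}=s_kX_ks_k+s_k$, get irreducibility from simplicity of the joint spectrum together with connectivity of the admissible-transposition graph on $\Stab(\lambda)$ (the coefficient $\sqrt{1-1/r^2}$ is indeed nonzero since $|r|\geq 2$ when $s_iT$ is standard), non-isomorphism from the fact that content vectors determine the shape, and completeness from $\sum_{\lambda\vdash d}(f^{\lambda})^2=d!$ plus Wedderburn; unitarity of the orthogonal matrices correctly upgrades the isomorphism to a $*$-isomorphism. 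Both routes are sound. What the Vershik--Okounkov order buys is conceptual economy: the coefficients $1/r$ and $\sqrt{1-1/r^2}$ are forced by the spectral analysis rather than guessed, and the braid-relation verification is bypassed; what your order buys is a shorter, self-contained verification that does not require developing the theory of the Gelfand--Tsetlin algebra first. Two standard facts you assert without proof --- that a standard tableau is reconstructible from its content vector, and that the admissible-transposition graph on $\Stab(\lambda)$ is connected --- do require short combinatorial arguments, but both are true and routine, so they are citable rather than gaps.
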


The combinatorial structure of Young diagrams is equivalent to the algebraic structure of the chain 
$\mathbb{C}[S_1] \subset \mathbb{C}[S_2] \subset \mathbb{C}[S_3] \subset ... $ of symmetric group algebras. 
This algebraic structure is encoded in the so called \emph{Bratteli diagram} of the chain of algebras. 
This is the directed graph whose vertices consist of all isomorphism classes of irreducible $S_d$ modules, 
for all integers $d \geq 1$. We connect two isomorphism classes $\lambda$ of $S_d$ and $\beta$ of $S_{d+1}$  by exactly 
$k$ directed edges (from $\lambda$ to $\beta$) when the multiplicity of $\lambda$ in the restriction of 
$\beta$ to $S_d$ is $k$. The following is a summary of the important properties of the Bratteli diagram of 
the symmetric groups, and can be found in \cite{Vershik}:

\begin{theorem}[\cite{Vershik}]
\label{theorem3}
The Bratteli diagram of the symmetric groups is (graph theoretically) isomorphic to the Young graph, 
whose vertices are Young diagrams of size $d$ (for all natural numbers $d \geq 1$), and whose edges are determined 
by the inclusion of Young diagrams of size $d$ into Young diagrams of size $d+1$. 
\end{theorem}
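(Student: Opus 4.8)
The plan is to prove the theorem by establishing the branching rule for the symmetric groups directly from the combinatorial model of Theorem~\ref{theorem2}, and then reading off the edge structure of the Bratteli diagram. By Theorem~\ref{theorem2} the vertices on the $S_d$ level of the Bratteli diagram are already identified with Young diagrams $\lambda \vdash d$, so the vertex sets of the two graphs agree automatically; the entire content of the statement is that the edges match, i.e.\ that for $\lambda \vdash d$ and $\beta \vdash d+1$ the multiplicity of $V_\lambda$ in the restriction $\mathrm{Res}^{S_{d+1}}_{S_d} V_\beta$ equals $1$ if $\beta$ is obtained from $\lambda$ by adding a single box, and $0$ otherwise.

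First I would analyze the restriction of $V_\beta$ to the subgroup $S_d \subset S_{d+1}$, which is generated by the Coxeter transpositions $s_1, \dots, s_{d-1}$. For a standard filling $T$ of $\beta$, the entry $d+1$ must occupy a removable corner of $\beta$; deleting it produces a standard filling $T'$ of a diagram $\lambda = \lambda(T) \vdash d$. For each removable corner $c$ of $\beta$, set $W_c = \Span\{ w_T : d+1 \text{ lies in } c \}$. Because each generator $s_i$ with $i \leq d-1$ acts, via the two cases preceding Theorem~\ref{theorem2}, only by mixing $w_T$ with $w_{s_iT}$ and never moves the box containing $d+1$, each subspace $W_c$ is invariant under $S_d$, and $V_\beta = \bigoplus_c W_c$ as an $S_d$-module.

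Next I would identify $W_c$ with $V_{\lambda}$, where $\lambda$ is $\beta$ with the corner $c$ removed. The assignment $w_T \mapsto w_{T'}$ is a bijection between the basis of $W_c$ and the basis of $V_\lambda$. The key point is that the data governing the $S_d$-action on $W_c$ — the same-row/same-column relations of $i$ and $i+1$ and the axial distances $r_i(T) = a_{i+1}(T) - a_i(T)$ for $i \leq d-1$ — depend only on the positions of the boxes labelled $1, \dots, d$, and are therefore unchanged by deleting the box holding $d+1$. Hence the action formulas for $s_1, \dots, s_{d-1}$ on $W_c$ coincide verbatim with those defining $V_\lambda$, yielding an $S_d$-isomorphism $W_c \cong V_\lambda$. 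Since distinct removable corners $c$ give distinct diagrams $\lambda$, each $V_\lambda$ occurring in $\mathrm{Res}^{S_{d+1}}_{S_d} V_\beta$ appears with multiplicity exactly one, and the diagrams that occur are precisely those differing from $\beta$ by a single box. This is exactly the edge relation of the Young graph, which completes the identification of the two directed graphs.

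The step I expect to be the main obstacle is the verification in the previous paragraph that the restricted action on $W_c$ matches the action defining $V_\lambda$ on the nose: one must check that the Vershik--Okounkov formulas are genuinely \emph{local}, in the sense that the same-row/same-column sign dichotomy and the coefficients $1/r$ and $\sqrt{1-1/r^2}$ are insensitive to the presence of the extra box containing $d+1$. This reduces to observing that the content $a_i(T)$ agrees with $a_i(T')$ for all $i \leq d$, since the content of box $i$ is determined by its coordinates alone. Once this compatibility of content vectors and axial distances is in place, the multiplicity-one branching, and hence the asserted graph isomorphism, follow formally.
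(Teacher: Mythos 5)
Your argument is correct, but note that the paper itself offers no proof of this statement---it is quoted directly from \cite{Vershik}---so the comparison is with the cited source rather than with anything in the text. What you do is derive the branching rule from the explicit model of Theorem \ref{theorem2}: the subspaces $W_c$ are $S_d$-invariant because $s_1,\dots,s_{d-1}$ never move the box containing $d+1$; the bijection $w_T \mapsto w_{T'}$ is equivariant because the same-row/same-column dichotomy, the standardness of $s_iT$ versus $s_iT'$, and the axial distances $r_i(T)$ for $i \leq d-1$ are all determined by the boxes labelled $1,\dots,d$ alone; and multiplicity one follows since distinct removable corners give distinct partitions. This is the standard proof that Young's orthogonal form implies multiplicity-free branching along the Young graph, and every step checks out, including the point you correctly isolate as the crux, namely that $a_i(T)=a_i(T')$ for all $i \leq d$. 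The one caveat is logical order relative to \cite{Vershik}: there the identification of the Bratteli diagram with the Young graph is the central theorem, established first via the Gelfand--Tsetlin algebra and the spectra of the Jucys--Murphy elements, and the explicit action formulas of Theorem \ref{theorem2} emerge afterwards as a consequence; your proof inverts this, taking Theorem \ref{theorem2} as a black box. Within this paper's presentation that is perfectly legitimate, since Theorem \ref{theorem2} is stated first and cited independently, and what your route buys is a short, purely local verification; what the Vershik--Okounkov route buys is that the graph structure is obtained without presupposing the orthogonal form, which is essential in their development because they use the branching to construct the Young basis in the first place. So if one traces dependencies back into the source your argument is not an independent proof of the whole theory, but as a derivation of Theorem \ref{theorem3} from Theorem \ref{theorem2} it is complete and correct.
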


We obtain a canonical basis, called the \emph{Young Basis} of an irreducible $S_d$ module 
$V_\lambda$ as follows: 
for each path $T$ in the Young graph (Bratteli diagram) from the unique $S_1$ module $\lambda_1$ to 
$\lambda$, say $T = \lambda_1 \to \lambda_2 \to ... \to \lambda$, 
choose a vector $v_T$ in $V_\lambda$ such that $v_T \in V_{\lambda_i}$ for all $i$. 
Doing so for each path in the Young graph gives a basis indexed by paths in the Young graph from 
$\lambda_1$ to $\lambda$, or equivalently, 
indexed by standard fillings of the diagram $\lambda$. The simplicity of the Bratteli diagram ensures that this 
basis is uniquely determined up to scalar multiplication. One of the most important properties 
of this basis  is its connection to the \emph{Gelfand Tsetlin algebra}, the subalgebra 
$GZ(d) \subset \mathbb{C}[S_d]$ generated by the centers $Z(\mathbb{C}[S_1]),..., Z(\mathbb{C}[S_d])$. 
Because of the simplicity of the Bratteli diagram, this subalgebra contains all projections from 
irreducible $S_{k+1}$ modules to irreducible $S_k$ modules. 
Hence this subalgebra contains the maximally abelian subalgebra of operators diagonal on the Young basis. 
Being abelian itself, this implies that 
the GZ algebra consists entirely of operators diagonal on the Young basis.  

In order to study the structure of the GZ-algebra, we consider the elements 
$X_i \in \mathbb{C}[S_d]$ defined as $X_1=0, X_2 = (1,2), X_3 = (1,3)+(2,3),...$. These are called the 
\emph{Young-Jucys Murphy elements} of the group algebra $\mathbb{C}[S_d]$, and their spectrum on the 
Young basis is the key to the analysis of the Bratteli diagram described above. The following facts summarize 
the relationship between $GZ(d)$, the Young basis, and the Jucys Murphy elements, and can be found in \cite{Vershik}:

	\begin{enumerate}
		\item $GZ(d)$ is the maximally abelian subalgebra of $\mathbb{C}[S_d]$ consisting of those 
		elements whose Fourier transform is a 
		diagonal operator with respect to the Young basis. 
		\item $GZ(d)$ is generated by the elements $X_1,X_2,...,X_d$. 
		\item The Young basis of a representation $V_{\lambda}$ is completely determined by the 
		eigenvalues of $X_1,...,X_d$ acting on it. 
	\end{enumerate}

For any Young vector $v_T \in V_\lambda$, denote the eigenvalues of $X_1,X_2,...,X_d$ on $v_T$ by the 
vector $c(T) = (a_1(T),...,a_d(T))$. The set of all such vectors is called 
$\Spec(d)$ in \cite{Vershik}. It can be shown that for a path $T$ in the Young graph, $c(T)$ is in fact the content 
vector of the standard filling associated to $T$. 
This is a highly important and nontrivial fact, and a complete proof can be found in \cite{Vershik} or \cite{Tolli}. 

Because the Young basis is unique up to scalar multiplication, it is possible (see \cite{Vershik}) to chose 
normalized coefficients such that the Coxeter generators act on the 
normalized basis $\{ w_T | T  \in \Stab(\lambda) \}$ according the rules (1) and (2) from the beginning of this section. 

Given the decomposition $\mathbb{C}[S_d] \cong \bigoplus_{\lambda \vdash d} \End(V_{\lambda})$, we obtain a set of 
matrix units $E_{T,S}^{\lambda}$ for $\mathbb{C}[S_d]$. In particular, $E_{T,S}^{\lambda}$ is the operator 
defined by $E_{T,S}^{\lambda} w_{R} = \delta_{R,S}w_T$ where $R$ is another standard filling of $\lambda$. 
We will denote the 
minimal projection $E_{T,T}^{\lambda}$ simply by $E_T^{\lambda}$. By our choice of basis, these elements 
lie in GZ($d$), and so they can be written as polynomials 
in the YJM elements. In the next section we will discuss the polynomials $p_T^{\lambda}$ such that 
$E_T^{\lambda}= p_T^{\lambda}(X_1,...,X_d)$, and we will use these to obtain 
explicit formulas for $E_{T,S}^{\lambda}$ as elements of $\mathbb{C}[S_d]$.

\section{Matrix Units in $\mathbb{C}[S_d]$}
\label{sec:matrixunits}

\subsection{Minimal Projections}
\label{subsec:centralprojections}

Before giving formulas for the general matrix unit, $E_{T,S}^{\lambda}$, we will first recall the formulas 
for the minimal projections 
$E_T^{\lambda} \in \mathbb{C}[S_d]$ as polynomials in the Young Jucys Murphy elements.

If $T\in \Stab(\lambda)$ is a standard filling of $\lambda$, denote by $\overline{T}$ the standard tableau of 
$d-1$ boxes obtained by removing the box containing 
$d$ from $T$, and denote by $\overline{\lambda}$ the shape of $\overline{T}$. 

\begin{proposition}
\label{prop1}
Let T and $\overline{T}$ be as above. Then as a polynomial in the YJM elements, we have
\begin{equation*}
	E_T^{\lambda} = E_{\overline{T}}^{\overline{\lambda}} (\prod_{S \ne T, \overline{S}=\overline{T}} \frac{(a_d(S)-X_d)}{(a_d(S)-a_d(T))})
\end{equation*}
\end{proposition}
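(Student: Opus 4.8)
The plan is to exploit that every element in the claimed identity lies in the Gelfand--Tsetlin algebra $GZ(d)$ and is therefore diagonal in the Young basis of $\mathbb{C}[S_d]=\bigoplus_{\mu\vdash d}\End(V_\mu)$. The factor $E_{\overline T}^{\overline\lambda}$ lies in $GZ(d-1)\subset GZ(d)$ and, as recalled above, is a polynomial in $X_1,\dots,X_{d-1}$, while the remaining factor is a polynomial in $X_d$. Hence both sides act diagonally on every Young vector $w_R$ with $R\in\Stab(\mu)$, $\mu\vdash d$, and it suffices to compare eigenvalues on each such $w_R$; the left-hand side $E_T^\lambda$ has eigenvalue $\delta_{R,T}$ by the very definition of the matrix units.

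Next I would compute the eigenvalue of the right-hand side on $w_R$. Two inputs are needed. First, $X_d\, w_R = a_d(R)\, w_R$, where $a_d(R)$ is the content of the box containing $d$ in $R$; this is precisely the statement that $c(R)$ is the content vector. Second, since $E_{\overline T}^{\overline\lambda}$ is a polynomial in $X_1,\dots,X_{d-1}$, its eigenvalue on $w_R$ depends only on $(a_1(R),\dots,a_{d-1}(R))=c(\overline R)$, and because it is the minimal projection onto $w_{\overline T}$ inside $\mathbb{C}[S_{d-1}]$ its eigenvalue on $w_R$ is $\delta_{\overline R,\overline T}$ (equivalently, by the branching rule $E_{\overline T}^{\overline\lambda}$ projects $V_\mu|_{S_{d-1}}$ onto the line spanned by the unique $w_R$ with $\overline R=\overline T$, when such an $R$ exists, and to $0$ otherwise). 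Combining these, the eigenvalue of the right-hand side on $w_R$ is
$$\delta_{\overline R,\overline T}\prod_{S\ne T,\ \overline S=\overline T}\frac{a_d(S)-a_d(R)}{a_d(S)-a_d(T)}.$$

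It then remains to check this equals $\delta_{R,T}$ by a short case analysis. If $\overline R\ne\overline T$ the prefactor vanishes and both sides are $0$, consistent with $R\ne T$. If $\overline R=\overline T$, then $R$ is obtained from $\overline T$ by adjoining the box labelled $d$ in some addable corner of $\overline\lambda$, so $R$ ranges over $\{T\}\cup\{S:S\ne T,\ \overline S=\overline T\}$; for $R=T$ every factor equals $1$ and the product is $1$, while for $R\ne T$ the factor indexed by $S=R$ has vanishing numerator, killing the product. In both subcases the right-hand side returns $\delta_{R,T}$, completing the argument.

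The one genuinely nontrivial point that must be justified---and which I expect to be the main obstacle---is that the denominators $a_d(S)-a_d(T)$ are all nonzero, so that the interpolation product is well defined and genuinely isolates $T$. This amounts to the combinatorial fact that distinct standard fillings $S$ with common restriction $\overline S=\overline T$ place the box $d$ in distinct addable corners of $\overline\lambda$, and that distinct addable corners of a Young diagram have pairwise distinct contents, since the contents are strictly monotonic along the boundary addable corners; I would record this as a small lemma (or invoke the interlacing of addable and removable box contents) before running the interpolation. The secondary technical point is the branching-rule identification of the eigenvalue of $E_{\overline T}^{\overline\lambda}$ in the larger algebra $\mathbb{C}[S_d]$, which relies on the multiplicity-freeness of restriction for the symmetric groups together with the compatibility of the Young bases along the chain $\mathbb{C}[S_{d-1}]\subset\mathbb{C}[S_d]$.
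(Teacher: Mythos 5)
Your argument is correct and takes essentially the same route as the paper, which disposes of the proposition in one line by citing \cite{Tolli} and observing that the right-hand side ``behaves as a minimal projection should, in that it maps $v_T$ to $v_T$, and maps all other basis vectors to zero.'' Your diagonalization on the Young basis, the eigenvalue case analysis via $X_d w_R = a_d(R)w_R$ and $E_{\overline{T}}^{\overline{\lambda}} w_R = \delta_{\overline{R},\overline{T}}\, w_R$, and the check that distinct addable corners of $\overline{\lambda}$ have distinct contents are precisely the details that sketch leaves implicit.
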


This result can be found in \cite{Tolli}, and follows from the fact that the right hand side behaves as a 
minimal projection should, in that it maps $v_T$ to $v_T$, and maps all other basis vectors to zero. 
This formula is similar in spirit to the formula for the spectral projections of  a self adjoint operator 
$A \in M_n(\mathbb{C})$ with a simple spectrum $\lambda_i \neq \lambda_j$, $i \neq j$(i.e. no repeated eigenvalues):

\begin{equation*}
	E_{\lambda_i}= \prod_{i \neq j} \dfrac{(A-\lambda_j\Id)}{(\lambda_i - \lambda_j)}
\end{equation*}
 
As a simple example, consider the path $T$ shown in figure \ref{fig2}. Using proposition \ref{prop1} we have $E_T^{\lambda} = E_{\overline{T}}^{\overline{\lambda}}(\dfrac{1}{4}(2 - X_4)(2+X_4))$. Repeating for $E_{\overline{T}}^{\overline{\lambda}}$ we have $E_{\overline{T}}^{\overline{\lambda}} = E_{\overline{\overline{T}}}^{\overline{\overline{\lambda}}} \dfrac{1}{3}(2+X_3)$. Finally $E_{\overline{\overline{T}}}^{\overline{\overline{\lambda}}} = \dfrac{1}{2}(1 - X_2)$. This gives $E_T^{\lambda} = \dfrac{1}{6}(2+X_4)(2-X_4)(2+X_3)(1-X_2)$.
 
From a computational point of view, specifically for calculating unitary integrals, we are free to use multiples of minimal projections: 
$$\tilde E_{\lambda_i}= \prod_{i \neq j} {(A-\lambda_j\Id)},$$
which are computationally less expensive. However, for all integration formulas we present in this paper, we will assume that the minimal projections have been normalized as in proposition \ref{prop1}.

\subsection{Main result}
\label{subsec:mainresult}

In this section we present explicit formulas for the matrix units $E_{T,S}^{\lambda}$.
Since
 the formulas
 rely on the pointwise action of the symmetric group on fillings of a given tableau $\lambda$, we 
 recall that if $T$ is a filling of $\lambda$ and $\sigma \in S_d$, then $\sigma T$ is the filling of $\lambda$ 
 obtained by replacing $i$ in $T$ by $\sigma(i)$. This defines a transitive action of $S_d$ on the set of 
 fillings of a given diagram $\lambda$. 

We start with the following:

\begin{proposition}
\label{prop2}
Let $T$ and $S$ be standard fillings of the digram $\lambda$, and suppose 
$T=s_iS$. Then $E_{T,S}^{\lambda} = \sqrt{\dfrac{r^2}{r^2-1}}(E_T^{\lambda}s_iE_S^{\lambda})$, 
where $r$ is the $i$th axial distance of the filling $T$. 
\end{proposition}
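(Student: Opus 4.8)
The plan is to identify the claimed element by its action on the Young basis. By Theorem \ref{theorem2} we have $\mathbb{C}[S_d] \cong \bigoplus_{\mu \vdash d}\End(V_\mu)$, so two elements of the group algebra coincide as soon as they induce the same operator on each irreducible $V_\mu$, and an operator on $V_\mu$ is pinned down by its effect on the orthonormal Young basis $\{w_R : R \in \Stab(\mu)\}$. Writing $c := \sqrt{r^2/(r^2-1)}$, I would therefore verify that $c\,E_T^\lambda s_i E_S^\lambda$ satisfies the defining relation of the matrix unit, namely $w_R \mapsto \delta_{R,S}\,w_T$ for every basis vector $w_R$.

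First I would dispose of the summands $\mu \neq \lambda$: both $E_T^\lambda$ and $E_S^\lambda$ are minimal projections inside $\End(V_\lambda)$, hence they act as zero on each $V_\mu$ with $\mu \neq \lambda$, and so does their product. This reduces the check to $V_\lambda$, where I would apply the three factors from right to left to $w_R$. The defining property $E_S^\lambda w_R = \delta_{R,S}\,w_S$ kills every case except $R = S$, so it remains to evaluate $E_T^\lambda s_i w_S$. Since the hypothesis $T = s_i S$ is the same as $S = s_i T$, this is precisely the configuration of case (2) of Section \ref{sec:Reminder}, with $r = r_i(T) = a_{i+1}(T) - a_i(T)$, giving
$$s_i w_S = \sqrt{1 - \tfrac{1}{r^2}}\,w_T - \tfrac{1}{r}\,w_S.$$
Applying $E_T^\lambda$ and using $E_T^\lambda w_T = w_T$ together with $E_T^\lambda w_S = 0$ (as $S \neq T$) annihilates the second term and leaves $E_T^\lambda s_i E_S^\lambda\, w_S = \sqrt{1 - 1/r^2}\,w_T$. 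The normalization is designed so that $c\,\sqrt{1 - 1/r^2} = \sqrt{r^2/(r^2-1)}\cdot\sqrt{(r^2-1)/r^2} = 1$, whence $c\,E_T^\lambda s_i E_S^\lambda\, w_S = w_T$, exactly as required.

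The computation is short, so the real work is bookkeeping, and the one point that needs genuine care is the \emph{direction and signs} of the Coxeter action. I must make sure that $T = s_i S$ places the pair $(T,S)$ in case (2) with the axial distance measured on $T$, so that projecting $s_i w_S$ onto $w_T$ produces the coefficient $\sqrt{1 - 1/r^2}$ (and not a $\pm 1/r$ term) — it is precisely this coefficient that the factor $c$ is built to cancel. I would also record in passing that $T \neq S$ forces the boxes holding $i$ and $i+1$ to lie in distinct rows and columns, so $|r| \geq 2$ and $c$ is a well-defined positive real; beyond this, no serious obstacle arises.
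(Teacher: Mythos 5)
Your proposal is correct and follows essentially the same route as the paper: the paper's proof likewise verifies that $\sqrt{r^2/(r^2-1)}\,E_T^{\lambda}s_iE_S^{\lambda}$ sends $w_S$ to $w_T$ and annihilates all other Young basis vectors, invoking rule (2) for the Coxeter action. Your write-up merely makes explicit the bookkeeping the paper leaves implicit (vanishing on blocks $\mu \neq \lambda$, the coefficient extraction from $s_iw_S$, and the fact that $|r|\geq 2$ makes the constant well defined), all of which is accurate.
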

\begin{proof}
Using formula (2) from page 3, we see that the right hand side of the equality sends $w_S$ to $w_T$ 
and sends all other basic elements to zero. Thus it must be $E_{T,S}^{\lambda}$.
\end{proof}
In order to generalize the previous result to arbitrary standard fillings $T$ and $S$ of a diagram $\lambda$, 
we need the notion of an \emph{admissible transposition} for the diagram $T$. 

A Coxeter generator $s_i = (i, i+1)$ is called \emph{admissible for T} if the filling $S = s_i T$ is also standard. 
Any two standard fillings of the same diagram can be transformed into one another by a sequence of 
admissible Coxeter generators. The minimal number of admissible Coxeter transpositions required is 
called the \emph{Coxeter distance between T and S}, and will be denoted by $d(T,S)$. Given a diagram 
$\lambda$, the Coxeter distance $d$ is a well defined metric on the set of paths Stab($\lambda$).

\begin{theorem} 
\label{theorem4}
Let $T,S \in Stab(\lambda)$. Let $\sigma$ be the permutation sending $T$ to $S$. 
Then, there exists $c \neq 0$ such that 
$E_{T,S}^{\lambda} = cE_T^{\lambda}\sigma^{-1}E_S^{\lambda}$. 
\end{theorem}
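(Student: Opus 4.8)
The plan is to reduce the statement to a single nonvanishing claim about a matrix coefficient of the representation, and then to establish that nonvanishing by expanding the product along a geodesic in the Coxeter metric. First I would compute the candidate right-hand side as an operator on the Young basis of $V_\lambda$. Since $E_S^\lambda$ is the orthogonal projection onto $w_S$, the product $E_T^\lambda \sigma^{-1} E_S^\lambda$ annihilates every $w_R$ with $R \neq S$, while on $w_S$ it gives $E_T^\lambda \sigma^{-1} w_S = \langle w_T, \sigma^{-1} w_S\rangle\, w_T$. Comparing with the defining relation $E_{T,S}^\lambda w_R = \delta_{R,S} w_T$, this already yields
\[
E_T^\lambda \sigma^{-1} E_S^\lambda = \langle w_T, \sigma^{-1} w_S\rangle\, E_{T,S}^\lambda .
\]
Hence the theorem holds with $c = \langle w_T, \sigma^{-1} w_S\rangle^{-1}$, and the entire content is to prove $\langle w_T, \sigma^{-1} w_S\rangle \neq 0$, where I note that $\sigma^{-1}$ is precisely the permutation carrying the filling $S$ to the filling $T$.

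Next I would pass to a geodesic for the Coxeter distance: write $\sigma^{-1} = s_{i_{k-1}} \cdots s_{i_0}$ with $k = d(S,T)$, which produces an admissible path $S = R_0 \to R_1 \to \cdots \to R_k = T$ with $R_{m+1} = s_{i_m} R_m$. The idea is to expand $\langle w_T, \sigma^{-1} w_S\rangle$ by inserting the identity $\sum_{U \in \Stab(\lambda)} E_U^\lambda$ of $\End(V_\lambda)$ between each pair of consecutive Coxeter generators. This writes the coefficient as a sum over \emph{walks} $U_0 = S, U_1, \dots, U_k = T$, the contribution of a walk being $\prod_{m=0}^{k-1} \langle w_{U_{m+1}}, s_{i_m} w_{U_m}\rangle$. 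By rules (1) and (2) for the Coxeter action, each factor is nonzero only if either $U_{m+1} = U_m$ (a \emph{stay}, coming from a diagonal term) or $U_{m+1} = s_{i_m} U_m$ with $s_{i_m} U_m$ standard (a \emph{move}, contributing the off-diagonal coefficient $\sqrt{1 - 1/r^2}$).

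The main obstacle, and the heart of the argument, is to show that the geodesic walk is the \emph{only} surviving term. I would argue from minimality of the Coxeter distance: along any contributing walk the moves are, by construction, admissible transpositions between standard fillings, so they assemble into an admissible path from $S$ to $T$ of length equal to the number of moves. A single stay would leave at most $k-1$ moves, hence an admissible path of length $< k = d(S,T)$, which is impossible. Therefore every step is a move, and the recurrence $U_{m+1} = s_{i_m} U_m$ with $U_0 = S$ forces $U_m = R_m$ for all $m$. The unique contributor is thus the geodesic walk, with coefficient $\prod_{m=0}^{k-1}\sqrt{1 - 1/r_m^2}$, where $r_m$ is the relevant axial distance; this is nonzero because an admissible transposition always has axial distance $|r_m| \geq 2$. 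Consequently $\langle w_T, \sigma^{-1} w_S\rangle = \prod_{m}\sqrt{1 - 1/r_m^2} \neq 0$, giving the desired $c \neq 0$.

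Finally I would record the consistency checks and a shortcut. The case $k = 1$ recovers Proposition \ref{prop2}, the case $T = S$ gives $c = 1$ by idempotency of $E_T^\lambda$, and the same geodesic computation read through the matrix-unit multiplication rule $E_{T,S}^\lambda = \prod_{m} E_{R_{m+1},R_m}^\lambda$, combined with Proposition \ref{prop2} and the collapsing of intermediate projections $E_{R_m}^\lambda E_{R_m}^\lambda = E_{R_m}^\lambda$, furnishes an independent derivation of the identity and of the explicit value of $c$.
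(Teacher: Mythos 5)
Your proof is correct, and it takes a genuinely different route from the paper's. The paper argues by induction on the Coxeter distance: writing $d(T,S)=k+1$, it chooses an intermediate $R\in\Stab(\lambda)$ with $d(T,R)=k$ and $d(R,S)=1$, inserts $\Id=\sum_L E_L^{\lambda}$ once in $E_T^{\lambda}\sigma^{-1}s_iE_S^{\lambda}$, observes that only the $L=R$ and $L=S$ terms survive and that each is proportional to $E_{T,S}^{\lambda}$, and then rules out cancellation by showing the $L=S$ term vanishes outright: since $\sigma^{-1}$ is a product of $k$ admissible generators, $\sigma^{-1}w_S$ lies in the span of the $w_Q$ with $d(S,Q)\leq k$, while $d(S,T)=k+1$, so $E_T^{\lambda}\sigma^{-1}w_S=0$. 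Your argument dispenses with the induction: you reduce the theorem to the single nonvanishing claim $\langle w_T,\sigma^{-1}w_S\rangle\neq 0$, insert a resolution of the identity at every letter of a geodesic word, and use minimality of $d(S,T)$ to show that any walk containing a stay would produce an admissible path from $S$ to $T$ of length strictly less than $d(S,T)$, so the geodesic walk is the unique contributor. The underlying mechanism is the same --- minimality of the Coxeter distance excludes short walks, and the paper's vanishing lemma is exactly your ``ball of radius $k$'' fact applied once --- but your packaging buys more: it delivers the closed form $c^{-1}=\langle w_T,\sigma^{-1}w_S\rangle=\prod_m\sqrt{1-1/r_m^{2}}$, i.e.\ Proposition \ref{prop3}, simultaneously with Theorem \ref{theorem4}, and since it exhibits $c$ as a matrix coefficient of $\sigma$ alone, the independence of $c$ from the chosen minimal admissible decomposition --- the very fact the paper's Remark flags as lacking a direct proof --- becomes immediate (your explanation is representation-theoretic rather than combinatorial, but it does settle that point). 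What the paper's induction buys in exchange is brevity and the direct reuse of Proposition \ref{prop2} as its base case. One small step worth making explicit in your write-up: the off-diagonal coefficient $\sqrt{1-1/r_m^{2}}$ is nonzero because for an admissible transposition the entries $i_m$ and $i_m+1$ occupy distinct rows and distinct columns in both fillings, forcing $|r_m|\geq 2$; you assert this, and it is indeed correct.
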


\begin{proof}
The case when $T$ and $S$ differ by a single Coxeter generator was proven above, so we shall prove the result 
by induction on the Coxeter distance $d(T,S)$. If said distance is $k+1$ we can send $T$ to $S$ via 
$R \in \Stab(\lambda)$, where $d(T,R) = k$ and $d(R,S)=1$. Suppose that $T$ is sent to $R$ with $\sigma$ 
and $R$ is sent to $S$ with $s_i$. The following calculation gives the desired result:

\begin{eqnarray*} 
	E_T^{\lambda}\sigma^{-1}s_i E_S^{\lambda} &=&  E_T^{\lambda}\sigma^{-1}\Id s_i E_S^{\lambda} \\ 
	&=&E_T^{\lambda}\sigma^{-1}E_R^{\lambda}s_i E_S^{\lambda} + \sum_{L\neq R}E_T^{\lambda} \sigma^{-1}E_L^{\lambda}s_i E_S^{\lambda} \\
	&=& E_T^{\lambda}\sigma^{-1}E_R^{\lambda}s_i E_S^{\lambda} +E_T^{\lambda}\sigma^{-1}E_S^{\lambda}s_i E_S^{\lambda} 
\end{eqnarray*}
Both summands are proportional to $E_{T,S}^{\lambda}$, 
but
it is possible the whole sum is zero. 
We must prove that $E_T^{\lambda}(\sigma^{-1}w_S) = 0$. However, since $\sigma^{-1}$ can, by assumption, 
be written as the product of $k$ 
Coxeter generators, we have that $\sigma^{-1} w_S$ lies in the span of those $w_Q$ with $d(S,Q) \leq k$. 
Since, in particular, $d(S,T) = k+1$ we have that $E_T^{\lambda} \sigma^{-1}w_S = 0$ and so the theorem holds. 
\end{proof}

The non-zero scalar $c$ actually has a closed form which is a clear generalization of proposition \ref{prop2}.
Given the permutation $\sigma$ above, decompose it into the minimal number  of  
Coxeter generators $\sigma = s_{i_k}\dots s_{i_1}$, so that $T \rightarrow T_1 \rightarrow \dots \rightarrow S$ is transformed into $S$ 
via the generators $s_{i_1},\dots,s_{i_k}$. Let $r_m = a_{i_m+1}(T_m) - a_{i_m}(T_m)$. 

\begin{proposition}
\label{prop3}

The non-zero scalar $c$ appearing in Theorem \ref{theorem4}
is given by $\sqrt{\prod_i(\dfrac{r_i^2}{r_i^2-1})}$.
\end{proposition}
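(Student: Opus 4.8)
The plan is to pin down $c$ from the single normalization that characterizes a matrix unit, namely $E_{T,S}^{\lambda}w_S = w_T$. Applying this identity to $w_S$ and substituting the expression from Theorem \ref{theorem4} together with $E_S^{\lambda}w_S = w_S$, I get $c\,E_T^{\lambda}\sigma^{-1}w_S = w_T$. Since $E_T^{\lambda}$ is the orthogonal projection onto the line $\mathbb{C}w_T$, this collapses the whole problem to computing one coefficient: if $\gamma := \langle \sigma^{-1}w_S, w_T\rangle$ is the coefficient of $w_T$ in $\sigma^{-1}w_S$, then $E_T^{\lambda}\sigma^{-1}w_S = \gamma\,w_T$, whence $c = 1/\gamma$. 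So everything reduces to evaluating $\gamma$.

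First I would set up the geodesic. With the reduced decomposition $\sigma = s_{i_k}\cdots s_{i_1}$, the path $T = T_0 \to T_1 \to \cdots \to T_k = S$, and $\sigma^{-1} = s_{i_1}\cdots s_{i_k}$, I would compute $\gamma$ by induction on the Coxeter distance $k$. Peel off the generator applied first to $w_S$, namely $s_{i_k}$. Since $T_{k-1} = s_{i_k}T_k$ and $r_k = a_{i_k+1}(T_k) - a_{i_k}(T_k)$ is exactly the axial distance $r_{i_k}(T_k)$, rule (2) of Section \ref{sec:Reminder} gives $s_{i_k}w_{T_k} = \frac{1}{r_k}w_{T_k} + \sqrt{1 - \frac{1}{r_k^2}}\,w_{T_{k-1}}$. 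Applying the remaining word $s_{i_1}\cdots s_{i_{k-1}}$ and extracting the coefficient of $w_T$, the $w_{T_{k-1}}$ summand contributes $\sqrt{1 - 1/r_k^2}$ times the coefficient of $w_T$ in $(s_{i_1}\cdots s_{i_{k-1}})w_{T_{k-1}}$. This latter word sends $T_{k-1}$ back to $T$ along the sub-path $T = T_0 \to \cdots \to T_{k-1}$, which is still geodesic by minimality (the triangle inequality forces $d(T,T_{k-1}) = k-1$), so the induction hypothesis applies and yields $\prod_{m=1}^{k-1}\sqrt{1 - 1/r_m^2}$.

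The crux is showing the other summand drops out. The $w_{T_k}$ term contributes $\frac{1}{r_k}$ times the coefficient of $w_T$ in $(s_{i_1}\cdots s_{i_{k-1}})w_{T_k} = (s_{i_1}\cdots s_{i_{k-1}})w_S$, and here I would reuse the vanishing argument from the proof of Theorem \ref{theorem4}: a product of $k-1$ Coxeter generators applied to $w_S$ lies in the span of the $w_Q$ with $d(S,Q)\le k-1$, whereas $d(S,T) = k$, so this coefficient is $0$. This is the main obstacle, and it is precisely where minimality of the decomposition enters (it guarantees $d(S,T)=k$ rather than something smaller). Combining the two contributions gives $\gamma = \prod_{m=1}^{k}\sqrt{1 - 1/r_m^2}$.

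Finally, since admissibility of each $s_{i_m}$ forces $|r_m|\ge 2$, the product $\gamma$ is a positive real, consistent with $c\neq 0$ from Theorem \ref{theorem4}. Inverting, $c = 1/\gamma = \prod_m \sqrt{r_m^2/(r_m^2-1)} = \sqrt{\prod_m r_m^2/(r_m^2-1)}$, as claimed. A minor point worth recording is that $r_m^2$ is insensitive to whether the axial distance is read off $T_m$ or $T_{m-1}$, since swapping $i_m$ and $i_m+1$ only reverses its sign; in the peeling order above I always read it off $T_m$, so this subtlety never actually interferes.
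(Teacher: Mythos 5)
Your proof is correct: the reduction of $c$ to the single coefficient $\gamma=\langle\sigma^{-1}w_S,w_T\rangle$, the peeling of $s_{i_k}$ via rule (2), the vanishing of the $\frac{1}{r_k}$-term by the span argument (which needs $d(S,T)=k$, exactly where minimality enters), the triangle-inequality check that the sub-path is geodesic, and the sign remark about reading $r_m$ off $T_m$ versus $T_{m-1}$ are all sound. The paper itself states Proposition \ref{prop3} without proof, leaving it implicit in the induction of Theorem \ref{theorem4} (where the constant accumulates one factor $\sqrt{r_k^2/(r_k^2-1)}$ per step, as in Proposition \ref{prop2}); your argument is essentially that same induction made explicit, so it matches the intended approach while supplying the omitted details.
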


\begin{remark}
While it is possible to have two different minimal decompositions of a given permutation into 
admissible transpositions, the constant in the previous proposition does not depend on the 
particular decomposition. We are unaware of a direct combinatorial proof of this fact.
\end{remark}

For the integration formulas in section 4, we will ignore the nonzero scalar 
$c$ and simply use the orthogonal (but non-normalized) basis
$E_T^{\lambda}\sigma_{S,T}E_S^{\lambda}$ of $\mathbb{C}[S_d]$.

\subsection{Comparison with known Formulas}
\label{subsec:comparison}

In \cite{Ram}, the authors construct a family of matrix units for the type 
$A$ Hecke algebras $H_d(q)$, which gives the symmetric group algebra $\mathbb{C}[S_d]$ when $q = 1$. 
In particular, these algebras have generators $g_1, \dots, g_{d-1}$ 
satisfying the relations

\begin{enumerate}
\item
$g_i g_{i+1}g_i = g_{i+1}g_i g_{i+1}$ for all $1 \leq i \leq d-2$
\item
 $g_i g_j = g_j g_i$ for all $|i-j| \geq 2$
\item
 $g_i^2 = (q-1)g_i + q$ for $1 \leq i \leq d-1$.
\end{enumerate}

For almost all $q$ this algebra is isomorphic to $\mathbb{C}[S_d]$, replacing $g_i$ with $(i, i+1)$. 
The construction of the matrix units given in \cite{Ram} are
more general than our construction, 
as they apply to $H_d(q)$ for all $q,d$. However, they are computationally far more expensive than the 
formulas we present in this section. 
Indeed, in order to construct $E_{T,S}^{\lambda}$ we need only 
construct $E_T, E_S$ and then multiply by the permutation $\sigma_{S,T}$ presented in this section.  
The presentation in \cite{Ram} cannot rely on any such element $\sigma_{S,T}$.

Actually, the formulas in \cite{Ram} have a normalizing constant in front, but for our purposes (in particular for 
calculating unitary integrals in the next section) the normalizing constants are not needed, only the orthogonality of 
the matrix units. The formulas in this section and in \cite{Ram} of course give the same results, but as we are 
concerned with unitary integrals (and hence with the symmetric group algebra) and with computation, our 
formulas are an improvement over those in \cite{Ram}, as they require fewer applications of induction on the 
size of the diagram $\lambda$.

\subsection{Inclusion Rules for $\mathbb{C}[S_d] \subset \mathbb{C}[S_{d+1}]$}
\label{subsec:inclusion}

Having formulas for all $E_{T,S}^{\lambda}$ in  $\mathbb{C}[S_d]$ leads to a natural question involving the 
inclusion of $\mathbb{C}[S_d] \subset \mathbb{C}[S_{d+1}]$. Recall that $S_d \subset S_{d+1}$, where 
a permutation of $d$ elements is viewed as a permutation of $d+1$ elements fixing the $(d+1)^{st}$. This 
extends to an inclusion of $*$ - algebras $\mathbb{C}[S_d] \subset \mathbb{C}[S_{d+1}]$. 

Given a Young diagram $\lambda \vdash d$, and given $T$,$S$ $\in \Stab(\lambda)$, we have an element  
$E_{T,S}^{\lambda}$ in  $\mathbb{C}[S_d]$. As an element of $\mathbb{C}[S_{d+1}]$, there is a 
unique decomposition

\begin{equation*}
	E_{T,S}^{\lambda}= \sum_{\beta \vdash d+1} \sum_{R,M \in Stab(\beta)} \alpha_{T,S}^{\beta}(R,M)E_{R,M}^{\beta}, 
\end{equation*}
where $\alpha_{T,S}^{\beta}(R,M)$ are complex numbers depending on $\beta,T, S, R$, and $M$. We would 
like a characterization of these coefficients.  

\begin{theorem}
\label{theorem5}
Let $\lambda \vdash d$ and T $\in$ Stab($\lambda$). Then as an element of 
$\mathbb{C}[S_{d+1}]$, $E_T^{\lambda}$ is projection onto the subspace $\bigoplus_{S}\mathbb{C}w_S$ 
spanned by all $w_S$ such that $\overline{S}=T$. Hence, 
\begin{equation*}
	E_T^{\lambda}= \sum_{S} E_S^{\beta} 
\end{equation*}
where we sum over all S such that $\overline{S}=T$ and $\overline{\beta}=\lambda$. 
\end{theorem}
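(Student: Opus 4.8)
The plan is to exploit the fact, recalled in Section~\ref{sec:Reminder}, that $E_T^{\lambda}$ lies in the Gelfand--Tsetlin algebra $GZ(d)$ and is therefore a polynomial $p_T^{\lambda}(X_1,\ldots,X_d)$ in the Young--Jucys--Murphy elements. Inside $\mathbb{C}[S_d]$ this polynomial is characterized spectrally: it acts as the identity on any Young vector whose $(X_1,\ldots,X_d)$-eigenvalue equals the content vector $c(T)=(a_1(T),\ldots,a_d(T))$, and annihilates every Young vector with a different eigenvalue vector. Since the assignment $R\mapsto c(R)$ is injective on the set of all standard tableaux of a given size (the content vector records the entire path in the Young graph), within $\mathbb{C}[S_d]$ the only such vector is $w_T$ itself, so $E_T^{\lambda}$ is the rank-one projection onto $\mathbb{C}w_T$, as expected.

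First I would observe that the inclusion $\mathbb{C}[S_d]\subset\mathbb{C}[S_{d+1}]$ sends each $X_i$ with $i\le d$ to the Young--Jucys--Murphy element of the same name in the larger algebra. Consequently $E_T^{\lambda}$, viewed in $\mathbb{C}[S_{d+1}]$, is still given by the very same polynomial $p_T^{\lambda}(X_1,\ldots,X_d)$, and the spectral description above still applies, only now we evaluate it on the larger module $\bigoplus_{\beta\vdash d+1}V_\beta$.

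Next I would invoke the key branching fact from the Vershik--Okounkov theory: for $S\in\Stab(\beta)$ with $\beta\vdash d+1$, the eigenvalues of $X_1,\ldots,X_d$ on the Young vector $w_S$ are precisely $(a_1(S),\ldots,a_d(S))$, which is the content vector $c(\overline{S})$ of the tableau $\overline{S}$ obtained by deleting the box labeled $d+1$. Combining this with the spectral characterization of $p_T^{\lambda}$, we get $E_T^{\lambda}w_S=w_S$ exactly when $c(\overline{S})=c(T)$, and $E_T^{\lambda}w_S=0$ otherwise. Using injectivity of the content map once more, $c(\overline{S})=c(T)$ is equivalent to $\overline{S}=T$. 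Hence $E_T^{\lambda}$ is the orthogonal projection onto $\bigoplus_{S:\overline{S}=T}\mathbb{C}w_S$, which proves the first assertion. The displayed formula then follows immediately, since the projection onto a single line $\mathbb{C}w_S$ is by definition $E_S^{\beta}$, where $\beta$ is the shape of $S$; summing over the relevant $S$ (equivalently, over all $\beta$ with $\overline{\beta}=\lambda$ and all $S\in\Stab(\beta)$ with $\overline{S}=T$) yields $E_T^{\lambda}=\sum_S E_S^{\beta}$.

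I expect the main point—and the only place where care is genuinely needed—to be the step identifying the eigenvalues of the truncated family $X_1,\ldots,X_d$ on $w_S$ with the content vector $c(\overline{S})$. Everything else is bookkeeping once this is in place; but this step is where the compatibility of the Young--Jucys--Murphy spectrum with the Young-graph chain, together with the injectivity of the content map, does the real work.
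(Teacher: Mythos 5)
Your proposal is correct and follows essentially the same route as the paper: both arguments rest on expressing $E_T^{\lambda}$ as a polynomial in the Young--Jucys--Murphy elements $X_1,\ldots,X_d$, observing that these act on a Young vector $w_S$ (for $S\in\Stab(\beta)$, $\beta\vdash d+1$) with eigenvalues $c(\overline{S})$, and concluding $E_T^{\lambda}w_S=\delta_{\overline{S},T}\,w_S$. The only cosmetic difference is that the paper unrolls the explicit product formula from Proposition~\ref{prop1} and checks the factors directly, whereas you invoke the abstract spectral characterization of $p_T^{\lambda}$ together with injectivity of the content map --- the same fact in substance, and you correctly isolate the genuinely nontrivial step (the compatibility of the YJM spectrum with the restriction to $S_d$).
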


\begin{proof}
We need to verify that if $\overline{S}=T$, then $E_T^{\lambda}(w_S)=w_S$, otherwise 
$E_T^{\lambda}(w_S)=0$. Write $T=\lambda_1 \to \lambda_2 \to... \to \lambda_d=\lambda$, and let 
$T_i$ be the path $\lambda_1 \to...\to \lambda_i$. Then we can write
\begin{equation*}
	E_T^{\lambda}= \prod_{i=2}^d \prod_{\overline{S}=\overline{T_i}, S\neq T_i}\frac{(a_i(S)-X_i)}{(a_i(S)-a_i(T_i))}
\end{equation*}
If $\overline{S}=T$ then $a_i(T)=a_i(S)$ for each $i=1,...,d$, and hence the polynomial above will send 
$w_S$ to $w_S$ as required. If $\overline{S}\neq T$ then the polynomial above will send $w_S$ to $0$ 
by construction. 
\end{proof}

\begin{lemma}
\label{lemma1}
Suppose $T,S \in Stab(\lambda)$, where $\lambda \vdash d$. Then, as an element of $\mathbb{C}[S_{d+1}]$, 
\begin{equation*}
	E_{T,S}^{\lambda}= \sum_{R,M} E_{R,M}^{\beta} 
\end{equation*}
where we sum over all $R$ and $M$ of the same shape $\beta$ such that $\overline{R}=T$ and $\overline{M}=S$. 
\end{lemma}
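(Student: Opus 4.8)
The plan is to verify the identity by comparing the action of both sides on the full Young basis, using the fact that $\mathbb{C}[S_{d+1}] = \bigoplus_{\beta \vdash d+1}\End(V_\beta)$ acts faithfully on $\bigoplus_{\beta}V_\beta$. Two elements of $\mathbb{C}[S_{d+1}]$ coincide if and only if they send each Young vector $w_Q$ (for $Q \in \Stab(\beta)$, $\beta \vdash d+1$) to the same vector, so it suffices to evaluate $E_{T,S}^{\lambda}w_Q$ and $\left(\sum_{R,M}E_{R,M}^{\beta}\right)w_Q$ for every such $Q$ and check they agree. This is the same strategy already used to prove Theorem \ref{theorem5}, now applied to a general matrix unit rather than a minimal projection.

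First I would record the branching compatibility of the Young basis, which is the structural input driving everything. Restricting an irreducible $S_{d+1}$ module to $S_d$ gives $V_\beta|_{S_d} = \bigoplus_{\lambda} V_\lambda$, summed over the $\lambda$ obtained from $\beta$ by deleting one box, and the Young basis refines this decomposition: each $w_Q$ with $Q \in \Stab(\beta)$ lies in the summand $V_{\overline{Q}}$ and is identified there with the Young vector $w_{\overline{Q}}$. Hence any element of $\mathbb{C}[S_d] \subset \mathbb{C}[S_{d+1}]$ acts on $w_Q$ exactly as it acts on $w_{\overline{Q}}$ inside $V_{\overline{Q}}$. Applying this to $E_{T,S}^{\lambda}$, whose action on $V_\lambda$ is $w_{\overline{Q}} \mapsto \delta_{\overline{Q},S}\,w_T$, yields $E_{T,S}^{\lambda}w_Q = \delta_{\overline{Q},S}\,w_{R_Q}$, where $R_Q \in \Stab(\beta)$ is the filling extending $T$ obtained by placing $d+1$ in the box added in passing from $\lambda$ to $\beta$.

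On the right-hand side, evaluating $\sum_{R,M}E_{R,M}^{\beta}$ on $w_Q$ selects the single term with $M=Q$: this forces $\overline{Q}=S$, leaves the unique $R$ of shape $\beta$ with $\overline{R}=T$, and returns $w_R$, which is exactly $R_Q$. Thus both sides agree on every $w_Q$, establishing the lemma. The main obstacle—and the only place requiring genuine care—is confirming that the scalars match exactly, i.e. that each term appears with coefficient $1$ and not with a stray constant of the sort appearing in Proposition \ref{prop3}. This is guaranteed precisely because the restriction embeds $V_\lambda$ isometrically into $V_\beta$ and carries the normalized Young vectors $w_S,w_T$ to the normalized Young vectors $w_Q,w_{R_Q}$, so no rescaling is introduced. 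Finally, for bookkeeping I would note that for each admissible $\beta \supset \lambda$ there is exactly one admissible pair $(R,M)$ with $\overline{R}=T$ and $\overline{M}=S$ of shape $\beta$, so the triple sum over $\beta,R,M$ collapses to a sum over the up-edges from $\lambda$ in the Young graph, matching the indexing in the statement.
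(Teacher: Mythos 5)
Your proof is correct, but it takes a genuinely different route from the paper. The paper proceeds by induction on the Coxeter distance $d(T,S)$: the base case $d(T,S)=1$ is checked directly by writing $E_{T,S}^{\lambda}=cE_T^{\lambda}s_iE_S^{\lambda}$ as in Proposition \ref{prop2} and invoking Theorem \ref{theorem5} for the projections, and the inductive step factors $E_{T,S}^{\lambda}=E_{T,R}^{\lambda}E_{R,S}^{\lambda}$ and multiplies out the two expansions using the orthogonality relations for matrix units. You instead verify the identity in one stroke on the whole Young basis, using the branching compatibility of the Gelfand--Tsetlin basis: under $V_\beta|_{S_d}=\bigoplus_{\lambda\nearrow\beta}V_\lambda$, each $w_Q$ spans the line of $V_{\overline{Q}}$ corresponding to $w_{\overline{Q}}$, so any element of $\mathbb{C}[S_d]$ acts on $w_Q$ exactly as on $w_{\overline{Q}}$. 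This is legitimate and in fact subsumes Theorem \ref{theorem5} as the special case $T=S$ rather than using it; it is uniform in $(T,S)$, needs no induction, and sidesteps the constants of Proposition \ref{prop3} entirely. The one point you state slightly circularly (``the restriction embeds $V_\lambda$ isometrically \dots so no rescaling is introduced'') deserves to be grounded concretely: the map $w_P\mapsto w_{Q(P)}$, where $Q(P)$ adjoins $d+1$ in the box $\beta\setminus\lambda$, is $S_d$-equivariant with coefficient exactly $1$ because the Coxeter rules (1) and (2) for $s_i$ with $i\leq d-1$ depend only on the positions of $i,i+1$ and the axial distances, all of which are identical in $Q$ and $\overline{Q}$; once that is said, your argument is complete. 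What each approach buys: the paper's induction stays entirely within the machinery it has already built (Theorem \ref{theorem4} and Theorem \ref{theorem5}) and never needs the equivariance of the branching embedding, while yours is shorter and more conceptual, at the cost of making explicit this standard Vershik--Okounkov compatibility that the paper only implicitly relies on.
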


\begin{proof}
We begin with the case when $T$ and $S$ differ by a Coxeter transposition $s_i$. 
Whenever $\overline{M}=S$, the right hand side of the equation above sends $w_M$ to $w_R$ for the 
unique $R$ of the same shape as $M$ with $\overline{R}=T$. Otherwise, $w_M$ is sent to $0$. 
Hence we need to verify that the left hand side does the same. We can right $E_{T,S}^{\lambda}$ as 
$c E_T^{\lambda}s_i E_S^{\lambda}$ where $c$ is the nonzero constant from proposition \ref{prop2}.
Since $s_iS=T$ implies $s_iM=R$, we have that $c E_T^{\lambda}s_i E_S^{\lambda}(w_M)=w_R$ 
when we are in the first situation above, otherwise  $c E_T^{\lambda}s_i E_S^{\lambda}(w_M)=0$, as required. 

For the general case, we proceed by induction on the Coxeter distance between $T$ and $S$. The case 
when $d(T,S)=1$ was proven above. Now suppose $d(T,S)=k+1$. Choose $R$ such that $d(T,R)=k$ 
and $d(R,S)=1$. We have $E_{T,S}^{\lambda}= E_{T,R}^{\lambda}E_{R,S}^{\lambda}$. By induction, the 
formula holds for each of $E_{T,R}^{\lambda}$ and $E_{R,S}^{\lambda}$, and multiplying out gives 
the required result. 
\end{proof}

\vspace{0.3cm}

The dual question to that of the inclusion of matrix units $E_{T,S}^{\lambda} \in \mathbb{C}[S_d] \subset \mathbb{C}[S_{d+1}]$ is the following: how do the matrix units 
$E_{T,S}^{\lambda}$ decompose upon restriction from $\mathbb{C}[S_{d+1}]$ to $\mathbb{C}[S_{d}]$. 
To be more precise, we define the map $\mathbb{E}: \mathbb{C}[S_{d+1}] \rightarrow \mathbb{C}[S_{d}]$ 
by stipulating that for $\sigma \in S_{d+1}$, $\mathbb{E}(\sigma)=\sigma$ if $\sigma \in S_d$ and 0 otherwise. 
This map is just projection from $\mathbb{C}[S_{d+1}]$ to $\mathbb{C}[S_{d}]$, which we call the 
\emph{conditional expectation}. Our question then is the following: 
for $E_{T,S}^{\lambda} \in \mathbb{C}[S_{d+1}]$, what are the coefficients of 
$\mathbb{E}(E_{T,S}^{\lambda}) = \sum_{\beta \vdash d} \sum_{R,M \in Stab(\beta)} \alpha_{T,S}^{\beta}(R,M)E_{R,M}^{\beta}$ as an element of $\mathbb{C}[S_d]$? 

\begin{theorem}
\label{theorem6}
If $\overline{S}$ and $\overline{T}$ have the same shape $\beta$, then $\mathbb{E}(E_{T,S}^{\lambda}) = \dfrac{\dim(V_{\lambda})}{d\dim(V_{\beta})}E_{\overline{T},\overline{S}}^{\beta}$. Otherwise, $\mathbb{E}(E_{T,S}^{\lambda}) = 0$.
\end{theorem}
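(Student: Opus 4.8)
The plan is to exploit the fact that $\mathbb{E}$ is nothing but the trace-preserving conditional expectation of $\mathbb{C}[S_{d+1}]$ onto its subalgebra $\mathbb{C}[S_d]$. Writing $\Trace$ for the canonical trace (the coefficient of the identity element), a one-line check on group elements shows $\Trace(\mathbb{E}(x)y)=\Trace(xy)$ for every $y\in\mathbb{C}[S_d]$, and that $\mathbb{E}$ is a $\mathbb{C}[S_d]$-bimodule map. Consequently $\mathbb{E}$ is the orthogonal projection onto $\mathbb{C}[S_d]$ for the Hilbert--Schmidt inner product $\langle a,b\rangle=\Trace(b^{*}a)$. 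Since the matrix units form an orthogonal basis of each of the two group algebras, I can recover every coefficient of $\mathbb{E}(E_{T,S}^{\lambda})$ by pairing against the $E_{R,M}^{\beta}$ and dividing by their squared norms.

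Concretely, for $\beta\vdash d$ and $R,M\in\Stab(\beta)$ the coefficient of $E_{R,M}^{\beta}$ in $\mathbb{E}(E_{T,S}^{\lambda})$ equals $\langle \mathbb{E}(E_{T,S}^{\lambda}),E_{R,M}^{\beta}\rangle/\|E_{R,M}^{\beta}\|^{2}$, and the defining property of the conditional expectation lets me rewrite the numerator as the pairing $\langle E_{T,S}^{\lambda},E_{R,M}^{\beta}\rangle$ computed entirely inside $\mathbb{C}[S_{d+1}]$, where $E_{R,M}^{\beta}$ is viewed through the inclusion $\mathbb{C}[S_d]\subset\mathbb{C}[S_{d+1}]$. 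This is exactly where Lemma \ref{lemma1} enters: it expands the $S_d$ matrix unit $(E_{R,M}^{\beta})^{*}=E_{M,R}^{\beta}$ as a sum $\sum E_{R',M'}^{\beta'}$ of genuine $S_{d+1}$ matrix units, with $\beta'\vdash d+1$, $\overline{R'}=M$ and $\overline{M'}=R$. Multiplying this expansion by $E_{T,S}^{\lambda}$ and using the multiplication rule $E_{R',M'}^{\beta'}E_{T,S}^{\lambda}=\delta_{\beta',\lambda}\delta_{M',T}E_{R',S}^{\lambda}$ collapses the double sum: a term survives only when $\beta'=\lambda$ and $M'=T$, which forces $R=\overline{T}$, and the $S_{d+1}$-orthogonality visible after taking the trace then forces $R'=S$, i.e. $M=\overline{S}$.

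Two conclusions drop out at once. First, the pairing vanishes unless $R=\overline{T}$ and $M=\overline{S}$; in particular it is identically zero unless $\overline{T}$ and $\overline{S}$ share a common shape $\beta$, which is the second alternative of the theorem. Second, in the surviving case the numerator equals $\Trace_{S_{d+1}}(E_{S,S}^{\lambda})=\dim(V_\lambda)/(d+1)!$, while the denominator is $\|E_{\overline{T},\overline{S}}^{\beta}\|^{2}=\Trace_{S_{d}}(E_{\overline{S},\overline{S}}^{\beta})=\dim(V_\beta)/d!$. Taking the quotient produces the scalar $\dim(V_\lambda)/\bigl((d+1)\dim(V_\beta)\bigr)$ multiplying $E_{\overline{T},\overline{S}}^{\beta}$ (here $d+1=|\lambda|$), while every remaining coefficient vanishes; this is the first alternative.

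The routine parts are the bimodule/trace verification and the multiplication of matrix units. The step that requires the most care, and which I expect to be the main obstacle, is the bookkeeping across the two distinct trace normalizations on $\mathbb{C}[S_{d+1}]$ and $\mathbb{C}[S_d]$ (of orders $(d+1)!$ and $d!$), since this ratio is precisely what fixes the numerical constant. A secondary point to handle cleanly is the appearance of the adjoint $(E_{R,M}^{\beta})^{*}$ in the inner product, so that Lemma \ref{lemma1} must be applied to $E_{M,R}^{\beta}$ rather than to $E_{R,M}^{\beta}$; keeping the roles of $R,M$ straight against $\overline{T},\overline{S}$ is the only real opportunity for error.
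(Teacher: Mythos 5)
Your proof is correct, and it genuinely differs from the paper's treatment: the paper offers no proof of Theorem~\ref{theorem6} at all, deferring to the more general result of \cite{Ram} on towers of semisimple algebras with simple branching. Your argument is self-contained and uses the paper's own machinery. You combine two elementary observations --- that $\mathbb{E}$ is the trace-preserving, $\mathbb{C}[S_d]$-bimodule conditional expectation for the coefficient-of-identity trace, hence the orthogonal projection for the associated inner product --- with Lemma~\ref{lemma1}, the paper's inclusion formula, applied to $(E_{R,M}^{\beta})^{*}=E_{M,R}^{\beta}$; the multiplication rule for matrix units plus the trace evaluations $\Trace(E_{S,S}^{\lambda})=\dim(V_{\lambda})/(d+1)!$ and $\|E_{\overline{T},\overline{S}}^{\beta}\|^{2}=\dim(V_{\beta})/d!$ then deliver the vanishing criterion and the constant in a single computation, with the index bookkeeping ($M'=T$ forcing $R=\overline{T}$, then the trace forcing $R'=S$, i.e.\ $M=\overline{S}$) handled correctly. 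This is exactly the kind of exploitation of the combinatorial structure of the Bratteli diagram that the paper itself complains is absent from \cite{Ram}, so your route arguably improves on the cited source for this special case. One point you navigated well deserves emphasis: with the section's conventions ($\lambda\vdash d+1$, $\mathbb{E}:\mathbb{C}[S_{d+1}]\to\mathbb{C}[S_{d}]$), the constant comes out as $\dim(V_{\lambda})/\bigl((d+1)\dim(V_{\beta})\bigr)$, which agrees with the theorem's printed $\dim(V_{\lambda})/\bigl(d\dim(V_{\beta})\bigr)$ only if the $d$ there is read as $|\lambda|$. A sanity check confirms your normalization: for $\mathbb{E}:\mathbb{C}[S_2]\to\mathbb{C}[S_1]$ one has $\mathbb{E}\bigl(\tfrac{1}{2}(e+(1,2))\bigr)=\tfrac{1}{2}e$, matching $\dim(V_{\lambda})/(|\lambda|\dim(V_{\beta}))=1/2$. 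So the apparent discrepancy is an off-by-one in the theorem's notation, not a flaw in your proof, and your derivation is what actually pins the constant down.
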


The proof of this, and a more general result can be found in \cite{Ram}, where the author looks at the setting of a 
chain $A_1 \subset A_2 \subset \dots$ of finite dimensional, semi-simple complex algebras whose Bratteli 
diagram has simple branching rules. As the symmetric group algebras form such a chain, the formulas 
in \cite{Ram} apply. However, they are computationally far more complex, and the proofs don't take 
advantage of the combinatorial structure of the Bratteli diagram that the results in \cite{Vershik} afford.

\section{Application to Unitary matrix integrals}
\label{sec:integrals}

\subsection{Algebraic preliminaries}

In this section, we are interested in the following problem. 
Let $\mu$ be the normalized Haar measure on the unitary group $U_n$ and 
$u_{ij}:U_n\to \mathbb{C}$ the $ij$ coordinate map.
We are interested in computing all moments of $\mu$, or equivalently, all integrals
$$\int_{U_n}u_{i_1j_1}\ldots u_{i_dj_d}\overline u_{k_1l_1}\ldots \overline u_{k_dl_d}d\mu (U)$$

Let us start with the following tensor reformulation:

\begin{lemma}
\label{lemma2}
$$\int_{U_n}u_{i_1j_1}\ldots u_{i_dj_d}\overline u_{k_1l_1}\ldots \overline u_{k_dl_d}d\mu (U)=
\int_{U_n} tr(e_{K,I}U^{\otimes d}e_{J,L}U^{*\otimes d})d\mu (U)$$
\end{lemma}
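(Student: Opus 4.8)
The plan is to prove the stronger pointwise statement that, for each fixed $U \in U_n$, the two integrands agree as complex numbers; integrating against $\mu$ then yields the lemma immediately. So I would fix $U$ and compute the trace appearing on the right-hand side by expanding everything in the standard basis of $M_n^{\otimes d}$, indexed by multi-indices in $\{1,\dots,n\}^d$.

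First I would record the matrix entries of the two tensor powers. Writing $A=(a_1,\dots,a_d)$, $B=(b_1,\dots,b_d)$ for multi-indices, one has $(U^{\otimes d})_{A,B} = \prod_{m=1}^d u_{a_m b_m}$, and since $(U^*)_{pq} = \overline{u}_{qp}$, the conjugate tensor power satisfies $(U^{*\otimes d})_{A,B} = \prod_{m=1}^d \overline{u}_{b_m a_m}$. The standard matrix unit $e_{K,I} = e_{k_1,i_1}\otimes\cdots\otimes e_{k_d,i_d}$ has entries $(e_{K,I})_{A,B} = \delta_{A,K}\,\delta_{B,I}$, where $\delta_{A,K} := \prod_{m=1}^d \delta_{a_m,k_m}$, and likewise for $e_{J,L}$.

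Next I would expand the trace as a sum over four multi-indices,
$$\mathrm{tr}\bigl(e_{K,I}U^{\otimes d}e_{J,L}U^{*\otimes d}\bigr) = \sum_{A,B,C,D}(e_{K,I})_{A,B}(U^{\otimes d})_{B,C}(e_{J,L})_{C,D}(U^{*\otimes d})_{D,A}.$$
The two matrix-unit factors collapse the sum entirely: $(e_{K,I})_{A,B}$ forces $A=K$ and $B=I$, while $(e_{J,L})_{C,D}$ forces $C=J$ and $D=L$. What survives is the single term $(U^{\otimes d})_{I,J}(U^{*\otimes d})_{L,K} = \bigl(\prod_{m} u_{i_m j_m}\bigr)\bigl(\prod_{m} \overline{u}_{k_m l_m}\bigr)$, which is precisely the integrand on the left-hand side. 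Since this identity holds for every $U\in U_n$, integrating both sides against the Haar measure $\mu$ completes the proof.

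This argument is purely a matter of careful index bookkeeping and involves no genuine obstacle. The only point requiring attention is the transpose-conjugate convention $(U^*)_{pq} = \overline{u}_{qp}$, which is what makes the index on $\overline{u}_{k_m l_m}$ appear in the order $(k_m,l_m)$, and the matching of the row/column roles of the indices in $e_{K,I}$ and $e_{J,L}$ against the cyclic ordering of the four factors in the trace; once these conventions are fixed consistently, the collapse of the sum is automatic.
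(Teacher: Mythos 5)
Your proof is correct and matches the intended argument: the paper states this lemma without proof, and your pointwise computation---expanding $\mathrm{tr}(e_{K,I}U^{\otimes d}e_{J,L}U^{*\otimes d})=\sum_{A,B,C,D}(e_{K,I})_{A,B}(U^{\otimes d})_{B,C}(e_{J,L})_{C,D}(U^{*\otimes d})_{D,A}$ and letting the matrix units collapse the sum to $(U^{\otimes d})_{I,J}(U^{*\otimes d})_{L,K}=\bigl(\prod_m u_{i_m j_m}\bigr)\bigl(\prod_m \overline{u}_{k_m l_m}\bigr)$---is exactly the routine verification the authors leave implicit. Your index conventions $(U^*)_{pq}=\overline{u}_{qp}$ and $(e_{K,I})_{A,B}=\delta_{A,K}\delta_{B,I}$ are consistent with the paper's definition $e_{I,J}=e_{i_1,j_1}\otimes\cdots\otimes e_{i_d,j_d}$, so the identity indeed holds for every $U\in U_n$ and integration against $\mu$ completes the proof.
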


In order to calculate these integrals explicitly, we 
use an algebraic result connecting the representation theory of the symmetric groups with those 
of the unitary groups. Recall that the vector space $(\mathbb{C}^n)^{\otimes d}$ is both a $S_d$ module and a $U_n$ module. The action of $S_d$ is given by the homomorphism $p_n^d : \mathbb{C}[S_d] \rightarrow \End((\mathbb{C}^n)^{\otimes d})$, with $p_n^d(\sigma)(v_1 \otimes \dots \otimes v_d) = v_{\sigma^{-1}(1)} \otimes \dots v_{\sigma^{-1}(d)}$. The action of $U_n$ is given by the homomorphism $\rho : U_n \rightarrow GL((\mathbb{C}^n)^{\otimes d})$, with $\rho(U)(v_1 \otimes \dots \otimes v_d) = (Uv_1) \otimes \dots \otimes(Uv_d)$. It is clear that these actions commute with one another, so that $(\mathbb{C}^n)^{\otimes d}$ is in fact a $\mathbb{C}[S_d] \times U_n$ module with the following structure:

\begin{theorem}[Schur-Weyl Duality, \cite{Tolli}, Theorem 8.2.10]
\label{thm:schur-weyl}
\label{theorem7}
The multiplicity free decomposition of $(\mathbb{C}^n)^{\otimes d}$ into irreducible 
$S_d \times U_n$ modules is given by $\bigoplus_{l(\lambda) \leq n} V_{\lambda}\otimes U^{\lambda}$, 
where $U^{\lambda} = \End_{S_d}(V_{\lambda}, (\mathbb{C}^n)^{\otimes d})$, and $V_{\lambda}$ is the 
irreducible representation of $S_d$ corresponding to the partition $\lambda$ of $d$.  
\end{theorem}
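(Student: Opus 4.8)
The plan is to prove this via the double commutant theorem. Write $V = \mathbb{C}^n$, let $\mathcal{A} = p_n^d(\mathbb{C}[S_d]) \subseteq \End(V^{\otimes d})$ be the image of the symmetric group algebra, and let $\mathcal{B} = \Span\{\rho(U) : U \in U_n\}$ be the subalgebra generated by the $U_n$-action. Both are unital subalgebras of $\End(V^{\otimes d})$, and the commutativity of the two actions recorded just before the statement says precisely that $\mathcal{B} \subseteq \mathcal{A}'$ and $\mathcal{A} \subseteq \mathcal{B}'$, where primes denote commutants in $\End(V^{\otimes d})$. The strategy is to upgrade these inclusions to equalities, so that $\mathcal{A}$ and $\mathcal{B}$ are mutual commutants, and then to read off the decomposition from the standard structure theorem for a semisimple subalgebra acting together with its commutant on a finite dimensional module.

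The heart of the argument, and the step I expect to be the main obstacle, is to compute $\mathcal{A}'$ explicitly and identify it with $\mathcal{B}$. Under the canonical identification $\End(V^{\otimes d}) \cong M_n^{\otimes d}$, one checks that conjugation by $p_n^d(\sigma)$ acts on $M_n^{\otimes d}$ by permuting tensor factors, namely $p_n^d(\sigma)(A_1 \otimes \cdots \otimes A_d) p_n^d(\sigma)^{-1} = A_{\sigma^{-1}(1)} \otimes \cdots \otimes A_{\sigma^{-1}(d)}$. Consequently $X \in M_n^{\otimes d}$ lies in $\mathcal{A}'$ if and only if $X$ is invariant under this permutation action, i.e. $X \in \mathrm{Sym}^d(M_n)$, the space of symmetric tensors. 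By polarization, $\mathrm{Sym}^d(M_n)$ is spanned by the pure powers $\{A^{\otimes d} : A \in M_n\}$; and since a linear functional annihilating all $U^{\otimes d}$, $U \in U_n$, gives a polynomial in the matrix entries vanishing on the Zariski dense subset $U_n \subseteq M_n$, hence vanishing identically, the span of $\{U^{\otimes d} : U \in U_n\}$ already exhausts $\mathrm{Sym}^d(M_n)$. Therefore $\mathcal{A}' = \mathrm{Sym}^d(M_n) = \mathcal{B}$.

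With $\mathcal{A}' = \mathcal{B}$ established, I would conclude as follows. By Theorem \ref{theorem2} the algebra $\mathbb{C}[S_d]$ is semisimple, hence so is its homomorphic image $\mathcal{A}$; the double commutant theorem then gives $\mathcal{A}'' = \mathcal{A}$, whence $\mathcal{B}' = \mathcal{A}$ and the two algebras are mutual commutants. The structure theorem for a semisimple algebra and its commutant now yields a canonical decomposition $V^{\otimes d} = \bigoplus_\lambda V_\lambda \otimes U^\lambda$, indexed by those $V_\lambda$ occurring in $V^{\otimes d}$, where $U^\lambda = \mathrm{Hom}_{S_d}(V_\lambda, V^{\otimes d}) = \End_{S_d}(V_\lambda, (\mathbb{C}^n)^{\otimes d})$ carries an irreducible $\mathcal{B}$-action, equivalently an irreducible $U_n$-action, and distinct $\lambda$ give pairwise non-isomorphic $S_d \times U_n$-modules, so the decomposition is multiplicity free. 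Finally, the multiplicity of $V_\lambda$ equals $\dim U^\lambda$, the dimension of the irreducible polynomial $U_n$-representation of highest weight $\lambda$, which by the nonvanishing of the Schur polynomial $s_\lambda$ in $n$ variables is positive exactly when $l(\lambda) \leq n$; restricting the sum to these $\lambda$ gives the stated form $\bigoplus_{l(\lambda) \leq n} V_\lambda \otimes U^\lambda$. The commutant computation together with the density argument is the only genuinely nontrivial input; the remaining steps are formal consequences of semisimplicity and the double commutant theorem.
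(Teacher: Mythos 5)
The paper offers no proof of this statement at all: it is imported directly from \cite{Tolli} (Theorem 8.2.10), so there is no internal argument to compare against. Your double-commutant proof is the standard textbook route, and its main line is sound: the observation that conjugation by $p_n^d(\sigma)$ permutes tensor factors of $M_n^{\otimes d}$, so that the commutant of $p_n^d(\mathbb{C}[S_d])$ is exactly the space of symmetric tensors in $M_n^{\otimes d}$; the polarization step identifying that space with $\Span\{A^{\otimes d} : A \in M_n\}$; the Zariski-density argument showing the $U^{\otimes d}$, $U \in U_n$, already span (a functional killing them gives a polynomial vanishing on $U_n$, hence identically); and the appeal to semisimplicity of $\mathcal{A}$ plus the double commutant theorem to obtain the multiplicity-free decomposition with $U^{\lambda} = \mathrm{Hom}_{S_d}(V_{\lambda}, (\mathbb{C}^n)^{\otimes d})$ irreducible or zero as a $U_n$-module. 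All of this is correct and is essentially the argument in the cited reference.

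The one step you should not leave as written is the last one. To decide which $\lambda$ occur, you invoke that $U^{\lambda}$ is ``the irreducible polynomial $U_n$-representation of highest weight $\lambda$'' with dimension given by the Schur polynomial $s_{\lambda}$. Nothing in your argument establishes the highest-weight identification; Schur--Weyl duality itself is the usual source of that fact, so as stated the step is circular, or at best imports an unproven classification. Fortunately the statement you actually need --- $\mathrm{Hom}_{S_d}(V_{\lambda}, (\mathbb{C}^n)^{\otimes d}) \neq 0$ if and only if $l(\lambda) \leq n$ --- has a short self-contained proof: realize $V_{\lambda} \cong \mathbb{C}[S_d]c_{\lambda}$ via the Young symmetrizer $c_{\lambda} = a_{\lambda}b_{\lambda}$, so that $U^{\lambda} \neq 0$ exactly when $c_{\lambda}(\mathbb{C}^n)^{\otimes d} \neq 0$. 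If $l(\lambda) > n$, the column antisymmetrizer $b_{\lambda}$ applied to the first column factors through $\Lambda^{l(\lambda)}(\mathbb{C}^n) = 0$, so $c_{\lambda}$ annihilates $(\mathbb{C}^n)^{\otimes d}$; if $l(\lambda) \leq n$, applying $c_{\lambda}$ to the vector that places the basis vector $e_r$ in every tensor slot lying in row $r$ of a fixed standard tableau gives a visibly nonzero result. With that substitution, your proof is complete.
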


Note that restricting to the subgroups ${1} \times U_n \cong U_n$ and $S_d \times {1} \cong S_d$ gives the decomposition of $(\mathbb{C}^n)^{\otimes d}$ as a $U_n$ and $S_d$ module. In particular, the multiplicity of $V_{\lambda}$ in $(\mathbb{C}^n)^{\otimes d}$ is zero for $l(\lambda) > n$. This fact will be used later in calculating integrals over sub-rectangles in $U_n$. 

\subsection{A reminder of Weingarten calculus}

Before supplying a new integration formula, let us recall the existing integration technique, known as Weingarten calculus. 
The idea goes back to 
 \cite{We}.

\begin{definition}
Consider the set $P(d)^U$ of pair partitions of $\{1,\ldots ,2d\}$ linking an element $<d$ with an element $\geq d$.
\begin{enumerate}
\item We plug multi-indices $i=(i_1,\ldots ,i_{2d})$ into partitions $p\in P(d)^U$, and we set $\delta_{pi}=1$ 
if all blocks of $p$ contain equal indices of $i$, and $\delta_{pi}=0$ if not.
\item The Gram matrix of partitions (of index $n\geq 4$) is given by $G_{n,d}(p,q)=n^{|p\vee q|}$, where 
$\vee$ is the set-theoretic sup, and $|.|$ is the number of blocks.
\item The Weingarten matrix $W_{n,d}$ is the inverse of $G_{n,d}$.
\end{enumerate}
\end{definition}

We can view elements of $P(d)^U$ as permutations in $S_d$ 
acting on $\End((\mathbb{C}^n)^{\otimes d})$.
The Gram matrix of this basis with respect to the scalar product induced by the non-normalized canonical trace
is nothing but $G_{k,n}$, as shown by the following computation:
$$<p,q>=\sum_i\delta_{pi}\delta_{qi}=\sum_i\delta_{p\vee q,i}=n^{|p\vee q|}$$

With these notations, we have the following result.

\begin{theorem}\cite{co,cs1}
\label{theorem8}
The Haar functional is given by
$$\int u_{i_1j_1}\ldots u_{i_kj_k}=\sum_{pq}\delta_{pi}\delta_{qj}W_{n,k}(p,q)$$
where the sum is over all pairs of diagrams $p,q\in NC(k)$.
\end{theorem}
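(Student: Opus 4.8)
The plan is to identify the Haar integral with the orthogonal projection onto the space of $U_n$-invariant tensors, and then to express that projection in the (generally non-orthogonal) spanning family indexed by the diagrams. First I would set $E=\int_{U_n}U^{\otimes k}\otimes\overline{U}^{\otimes k}\,d\mu(U)$, acting on $(\mathbb{C}^n)^{\otimes k}\otimes\overline{(\mathbb{C}^n)^{\otimes k}}$, and observe via the tensor reformulation of Lemma \ref{lemma2} that the moment in question is a matrix coefficient of $E$ between the two standard tensors determined by the multi-indices. The left-invariance of Haar measure gives $(V^{\otimes k}\otimes\overline{V}^{\otimes k})\,E=E$ for every $V\in U_n$, and together with $E^{*}=E$ and $E^{2}=E$ (the latter from Fubini and the invariance $U\mapsto VU$) this shows that $E$ is the orthogonal projection onto the invariant subspace $\mathrm{Inv}$.

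Next I would identify $\mathrm{Inv}$. By Schur--Weyl duality (Theorem \ref{theorem7}) the commutant of the $U_n$-action on $(\mathbb{C}^n)^{\otimes k}$ is exactly the image $p_n^k(\mathbb{C}[S_k])$; transporting this statement through the canonical identification of an operator with a tensor (bending the output legs into inputs), the invariant subspace $\mathrm{Inv}$ is spanned by the diagram vectors $\{v_p : p\in P(k)^U\}$, which are in bijection with the permutations in $S_k$. These vectors span $\mathrm{Inv}$ for every $n$, and form a basis precisely when $n\geq k$.

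The third step is the projection formula. For any family $\{v_p\}$ spanning a subspace, the orthogonal projection onto that subspace equals $\sum_{p,q}(G^{+})_{p,q}\,|v_p\rangle\langle v_q|$, where $G(p,q)=\langle v_p,v_q\rangle$ and $G^{+}$ is the inverse (or pseudoinverse). The excerpt already records $\langle v_p,v_q\rangle=n^{|p\vee q|}=G_{n,k}(p,q)$, so the projection is governed by the Weingarten matrix $W_{n,k}=G_{n,k}^{-1}$. Taking the matrix coefficient of this operator between the standard tensors turns each pairing $\langle v_p,\,\cdot\,\rangle$ and $\langle\,\cdot\,,v_q\rangle$ into the delta symbols $\delta_{pi}$ and $\delta_{qj}$, and reading off the result yields $\sum_{p,q}\delta_{pi}\delta_{qj}W_{n,k}(p,q)$, as claimed.

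The hard part will be the degenerate regime $n<k$, where the diagram vectors become linearly dependent, the Gram matrix $G_{n,k}$ is singular, and the literal inverse $W_{n,k}=G_{n,k}^{-1}$ does not exist. I would resolve this by carrying the Moore--Penrose pseudoinverse throughout: the projection formula above holds verbatim with $G^{+}$ in place of $G^{-1}$, and since for fixed diagrams the right-hand side is a rational function of $n$ agreeing with the integral on the non-degenerate range $n\geq k$, the identity extends to all $n$ with $W_{n,k}$ read as $G_{n,k}^{+}$. The only other point needing care is the passage between the operator picture of the commutant (the permutations $\rho(\sigma)$) and the tensor picture (the diagram vectors $v_p$), but this is the standard leg-bending identification and leaves the Gram computation untouched.
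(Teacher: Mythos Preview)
The paper does not actually prove Theorem~\ref{theorem8}; it is stated purely as a reminder of the existing Weingarten calculus, and the proof is deferred to the cited references \cite{co,cs1} (with simplified versions in \cite{bc1,bc2,bc3} and \cite{cm}). There is therefore nothing in the paper to compare your attempt against.

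That said, your outline is correct and is essentially the standard argument one finds in those references: realize the Haar averaging operator $E=\int U^{\otimes k}\otimes\overline{U}^{\otimes k}\,d\mu$ as the orthogonal projection onto the $U_n$-invariant tensors, identify the invariants with the span of the diagram vectors via Schur--Weyl duality (Theorem~\ref{theorem7}) and the operator--tensor ``leg-bending'' correspondence, and then write the projection in that spanning family using the (pseudo)inverse of the Gram matrix $G_{n,k}$. Your treatment of the degenerate regime $n<k$ via the Moore--Penrose pseudoinverse is also the customary way to handle it.
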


The original proof can be found in 
 \cite{co,cs1}, and simplified proofs are available in the more general quantum group setup
 \cite{bc1,bc2,bc3}. See also \cite{cm}.

\subsection{A new integration formula}
\label{subsec:integrationformula}

Theorem \ref{thm:schur-weyl}
 implies that the map $p_n^d$ is injective on the subalgebra $\mathbb{C}_n[S_d] \cong \bigoplus_{l(\lambda) \leq n} \End(V_\lambda)$, and hence we will view $\mathbb{C}_n[S_d]$ as being contained in $\End((\mathbb{C}^n)^{\otimes d})$ via the map $p_n^d$. Further, the subalgebra $\mathbb{C}_n[S_d] \cong \bigoplus_{l(\lambda) \leq n} \End(V_\lambda)$ is contained in the centralizer $\End_{U_n}((\mathbb{C}^n)^{\otimes d})$. Theorem 7 implies that we actually have equality, giving the following proposition:

\begin{proposition}
\label{prop4}
For any $A\in M_n^{\otimes d}$, we have
$$\int_{U_n} U^{\otimes d} A  (U^{-1})^{\otimes d}d\mu (U)=\mathbb{E}(A)$$
where $\mathbb{E}$ is the orthogonal projection with respect to the Hilbert Schmidt norm of $M_n^{\otimes d}$ onto the sub algebra $\mathbb{C}_n[S_d] \subset \End((\mathbb{C}^n)^{\otimes d})$.

\end{proposition}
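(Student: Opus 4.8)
The plan is to show that averaging over the unitary group is exactly orthogonal projection onto the commutant, by verifying two things: that the averaging map $\Phi(A) := \int_{U_n} U^{\otimes d} A (U^{-1})^{\otimes d}\, d\mu(U)$ lands inside the commutant $\End_{U_n}((\bC^n)^{\otimes d}) = \mathbb{C}_n[S_d]$, and that it restricts to the identity on that subalgebra, fixes its orthogonal complement, and is self-adjoint with respect to the Hilbert–Schmidt inner product. These three facts together characterize $\Phi$ as the orthogonal projection $\mathbb{E}$.

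First I would verify that $\Phi(A)$ commutes with every $\rho(V) = V^{\otimes d}$ for $V \in U_n$. This is the standard translation-invariance argument: by left-invariance of the Haar measure, the substitution $U \mapsto VU$ shows $\rho(V)\Phi(A) = \Phi(A)\rho(V)$, so $\Phi(A) \in \End_{U_n}((\bC^n)^{\otimes d})$. By Schur–Weyl duality (Theorem \ref{thm:schur-weyl}), the commutant of the $U_n$-action on $(\bC^n)^{\otimes d}$ is precisely the image of $\mathbb{C}_n[S_d]$ under $p_n^d$, which is the identification the excerpt has already set up. Hence $\Phi$ maps $M_n^{\otimes d}$ into $\mathbb{C}_n[S_d]$. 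Next I would check that $\Phi$ acts as the identity on $\mathbb{C}_n[S_d]$: if $A \in \mathbb{C}_n[S_d]$ then $A$ already commutes with every $U^{\otimes d}$, so the integrand is constantly $U^{\otimes d}(U^{-1})^{\otimes d}A = A$, and $\Phi(A) = A$ since $\mu$ is a probability measure.

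It remains to identify $\Phi$ with the \emph{orthogonal} projection, i.e. to control the complement. The cleanest route is to show $\Phi$ is self-adjoint for the Hilbert–Schmidt inner product $\langle A, B\rangle = \Trace(A^* B)$. Writing out $\langle \Phi(A), B\rangle$ and using that $U^{\otimes d}$ is unitary together with the trace identity $\Trace(X^* (UYU^{-1})) = \Trace((U^{-1}X U)^* Y)$, then invoking invariance of Haar measure under $U \mapsto U^{-1}$, should yield $\langle \Phi(A), B\rangle = \langle A, \Phi(B)\rangle$. An idempotent, self-adjoint linear map whose range is $\mathbb{C}_n[S_d]$ and which fixes $\mathbb{C}_n[S_d]$ pointwise is exactly the orthogonal projection onto $\mathbb{C}_n[S_d]$, which is the definition of $\mathbb{E}$ in the statement.

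The main obstacle is not any single computation but the clean interchange between the two roles of Schur–Weyl duality: one must be careful to use the duality in the precise form that the \emph{full} commutant of the $U_n$-action equals $\mathbb{C}_n[S_d]$ (not merely that $\mathbb{C}_n[S_d]$ sits inside the commutant), since this equality is what guarantees the range of $\Phi$ is all of $\mathbb{C}_n[S_d]$ and hence that $\Phi$ is genuinely the projection onto that subalgebra rather than onto a smaller piece. The excerpt flags this as the step where ``Theorem 7 implies that we actually have equality,'' and that equality is the crux; the invariance and self-adjointness manipulations, while essential, are routine by comparison.
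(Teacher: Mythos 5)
Your proof is correct and takes essentially the same route as the paper, which offers no separate argument for Proposition \ref{prop4} beyond the discussion preceding it: Schur--Weyl duality (Theorem \ref{thm:schur-weyl}) upgrades the containment $\mathbb{C}_n[S_d] \subseteq \End_{U_n}((\mathbb{C}^n)^{\otimes d})$ to an equality, and then Haar averaging is identified with the orthogonal projection onto the commutant by exactly the invariance, idempotence, and Hilbert--Schmidt self-adjointness checks you carry out. One verbal slip only: an orthogonal projection annihilates (rather than ``fixes'') the orthogonal complement of its range, but your executed argument relies on self-adjointness, not on that misstated property, so nothing breaks.
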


Strictly speaking the projection above is orthogonal with respect to the $U_n$ invariant 
inner product on $M_n^{\otimes d}$, which in our case happens to be the Hilbert Schmidt 
inner product. 

Proposition \ref{prop4},  
together with the following elementary fact, will allow us to obtain specific formulas for polynomial 
integrals. The orthogonal projection above is normally referred to as \emph{conditional expectation}. 

\begin{proposition}
\label{prop5}
$$\mathbb{E}(A)=\sum_{\lambda \vdash d ; S,T\in Stab(\lambda), l(\lambda) \leq n} \langle A, E_{S,T}\rangle E_{S,T}/||E_{S,T}^{\lambda}||^2$$
\end{proposition}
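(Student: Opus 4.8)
The plan is to recognize Proposition \ref{prop5} as nothing more than the expansion of an orthogonal projection in an orthogonal basis. By Proposition \ref{prop4}, $\mathbb{E}$ is the orthogonal projection of $M_n^{\otimes d}$ onto $\mathbb{C}_n[S_d]$ with respect to the Hilbert--Schmidt inner product. For any orthogonal projection $P$ onto a subspace $W$ admitting an orthogonal basis $\{w_i\}$, one has the elementary identity $P(A) = \sum_i \langle A, w_i\rangle\, w_i / \|w_i\|^2$. Thus everything reduces to showing that the family $\{E_{S,T}^\lambda : \lambda \vdash d,\ l(\lambda)\le n,\ S,T\in\Stab(\lambda)\}$ is an orthogonal basis of $\mathbb{C}_n[S_d]$, after which the stated formula is immediate by substitution.

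First I would establish the basis property. By Theorem \ref{theorem4} each $E_{S,T}^\lambda$ is a nonzero scalar multiple of a genuine matrix unit in $\End(V_\lambda)$, and under the decomposition $\mathbb{C}_n[S_d] \cong \bigoplus_{l(\lambda)\le n}\End(V_\lambda)$ furnished by Schur--Weyl duality (Theorem \ref{thm:schur-weyl}), the genuine matrix units indexed by pairs $(S,T)$ of standard fillings of $\lambda$, with $l(\lambda)\le n$, form a basis of the right-hand side. Rescaling each by a nonzero constant preserves the basis property, so this step is routine.

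The heart of the argument, and the step I expect to be the main obstacle, is orthogonality. Writing $\langle X, Y\rangle = \Trace(XY^\ast)$ for the Hilbert--Schmidt inner product, I would first note that since the minimal projections $E_S^\lambda$ are self-adjoint and the $\ast$-operation inverts permutations, the adjoint of a matrix unit satisfies $(E_{R,M}^\beta)^\ast = \kappa\, E_{M,R}^\beta$ for a positive scalar $\kappa$. The matrix-unit multiplication rule then gives $E_{S,T}^\lambda (E_{R,M}^\beta)^\ast \propto \delta_{\lambda\beta}\,\delta_{T,M}\,E_{S,R}^\lambda$, and the trace of a matrix unit $E_{S,R}^\lambda$ vanishes unless $S=R$. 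Consequently $\langle E_{S,T}^\lambda, E_{R,M}^\beta\rangle = 0$ unless $(\lambda, S, T) = (\beta, R, M)$, which is exactly the required orthogonality.

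The one point demanding genuine care is the embedding $\mathbb{C}_n[S_d]\subset M_n^{\otimes d}$ via $p_n^d$: the Hilbert--Schmidt trace restricted to $\End(V_\lambda)\otimes \Id_{U^\lambda}$ equals $\dim(U^\lambda)$ times the ordinary trace on $\End(V_\lambda)$, so one must check that this multiplicity factor neither annihilates a diagonal term nor spoils the vanishing of off-diagonal terms. Since $\dim(U^\lambda)>0$ for every $\lambda$ with $l(\lambda)\le n$ by Theorem \ref{thm:schur-weyl}, the diagonal values $\|E_{S,T}^\lambda\|^2$ are strictly positive and the orthogonality is undisturbed. With orthogonality and the basis property in hand, substituting into the projection identity of the first paragraph completes the proof.
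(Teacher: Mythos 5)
Your proposal is correct and matches the paper's intent: the paper states Proposition \ref{prop5} without proof, as an ``elementary fact'' following Proposition \ref{prop4}, namely the standard expansion of an orthogonal projection in an orthogonal basis. You have simply supplied the details the authors leave implicit --- the basis property of the rescaled matrix units, their Hilbert--Schmidt orthogonality via the trace of off-diagonal units, and the positivity of $\|E_{S,T}^{\lambda}\|^2$ coming from $\dim(U^{\lambda})>0$ when $l(\lambda)\leq n$ --- all of which are accurate.
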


Before proving the main result of this section, we recall the following fact concerning integrating a 
function $f : G \rightarrow V$, where $G$ is a compact topological group and $V$ is a Hilbert space:

\begin{equation*}
	\langle \int_G f(g)d\mu(g) , w \rangle = \int_G \langle f(g) , w \rangle d\mu(g)
\end{equation*}

\begin{theorem}
\label{theorem9}
The following integration formula holds true
$$\int_{U_n}u_{i_1j_1}\ldots u_{i_dj_d}\overline u_{k_1l_1}\ldots \overline u_{k_dl_d}d\mu (U) =  
\sum_{\lambda \vdash d ; S,T \in Stab(\lambda), l(\lambda) \leq n} \dfrac{\langle e_{J,L},E_{S,T}^{\lambda}\rangle \langle E_{S,T}^{\lambda},e_{I,K} \rangle}{||E_{S,T}^{\lambda}||^{2}}.$$
\end{theorem}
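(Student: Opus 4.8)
The plan is to chain together Lemma~\ref{lemma2}, Proposition~\ref{prop4}, and Proposition~\ref{prop5}, converting the trace into a Hilbert--Schmidt inner product and then moving the Haar integral inside that inner product. First I would rewrite the integrand supplied by Lemma~\ref{lemma2} as an inner product. Working with the Hilbert--Schmidt inner product $\langle A, B\rangle = \mathrm{tr}(AB^*)$ on $M_n^{\otimes d}$ (linear in the first slot, which is the convention under which Proposition~\ref{prop5} is the genuine orthogonal projection), and using $e_{I,K}^* = e_{K,I}$ together with cyclicity of the trace, one gets
$$\mathrm{tr}(e_{K,I}\, U^{\otimes d} e_{J,L} U^{*\otimes d}) = \langle U^{\otimes d} e_{J,L} U^{*\otimes d},\, e_{I,K}\rangle .$$
Thus the quantity to compute becomes $\int_{U_n} \langle U^{\otimes d} e_{J,L} U^{*\otimes d},\, e_{I,K}\rangle\, d\mu(U)$.

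Next I would invoke the interchange of integral and inner product recalled immediately before the statement; this is legitimate because $M_n^{\otimes d}$ is finite dimensional and $U \mapsto U^{\otimes d} e_{J,L} U^{*\otimes d}$ is continuous. Writing $U^{*\otimes d} = (U^{-1})^{\otimes d}$ for unitary $U$, this moves the Haar integral inside the first slot:
$$\int_{U_n} \langle U^{\otimes d} e_{J,L} U^{*\otimes d},\, e_{I,K}\rangle\, d\mu(U) = \Big\langle \int_{U_n} U^{\otimes d} e_{J,L} (U^{-1})^{\otimes d}\, d\mu(U),\ e_{I,K}\Big\rangle .$$
By Proposition~\ref{prop4} the inner integral is exactly the conditional expectation $\mathbb{E}(e_{J,L})$ onto $\mathbb{C}_n[S_d]$, so the expression equals $\langle \mathbb{E}(e_{J,L}),\, e_{I,K}\rangle$.

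Finally I would expand $\mathbb{E}(e_{J,L})$ via Proposition~\ref{prop5} and use linearity of the inner product in its first argument, pulling out each scalar coefficient $\langle e_{J,L}, E_{S,T}^\lambda\rangle / ||E_{S,T}^\lambda||^2$. This yields
$$\langle \mathbb{E}(e_{J,L}),\, e_{I,K}\rangle = \sum_{\lambda \vdash d;\, S,T\in Stab(\lambda),\, l(\lambda)\leq n} \frac{\langle e_{J,L}, E_{S,T}^\lambda\rangle\, \langle E_{S,T}^\lambda, e_{I,K}\rangle}{||E_{S,T}^\lambda||^2},$$
which is precisely the asserted formula.

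I do not expect a deep obstacle: the argument is essentially a bookkeeping chain, and every ingredient is already established. The one point demanding genuine care is convention matching — verifying that the placement of the index tuples $I,K$ (coming from $e_{K,I}$) and $J,L$ (coming from $e_{J,L}$) in Lemma~\ref{lemma2} is consistent with the inner-product rewriting, and confirming that the Hilbert--Schmidt product is indeed the $U_n$-invariant product underlying Proposition~\ref{prop4}. It is also worth remarking that the formula is insensitive to the normalization of the $E_{S,T}^\lambda$: replacing $E_{S,T}^\lambda$ by any nonzero multiple $c E_{S,T}^\lambda$ leaves each summand $\langle e_{J,L}, E\rangle\, \langle E, e_{I,K}\rangle / ||E||^2$ unchanged, which is exactly what licenses using the non-normalized orthogonal basis of Theorem~\ref{theorem4}.
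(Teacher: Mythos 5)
Your proposal is correct and takes essentially the same route as the paper's proof: Lemma~\ref{lemma2}, interchange of the Haar integral with the Hilbert--Schmidt inner product, Proposition~\ref{prop4} to identify $\int_{U_n} U^{\otimes d} e_{J,L} U^{*\otimes d}\, d\mu(U) = \mathbb{E}(e_{J,L})$, and the expansion of Proposition~\ref{prop5}. The only (cosmetic) difference is that you convert the trace directly into the single inner product $\langle U^{\otimes d} e_{J,L} U^{*\otimes d}, e_{I,K}\rangle$ via cyclicity and $e_{I,K}^{*} = e_{K,I}$, whereas the paper first expands the integrated operator in the full basis $e_{R,M}$ of $M_n^{\otimes d}$ and then takes the trace; both land on $\langle \mathbb{E}(e_{J,L}), e_{I,K}\rangle$ and conclude identically.
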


\begin{proof}
We begin with noting that 
$$\int_U e_{K,I}U^{\otimes d}e_{J,L}{U}^{* \otimes d} d\mu(U) = 
\sum_M\langle \mathbb{E}(e_{J,L}), e_{I,M}\rangle e_{K,M}.$$
Indeed,
$\int_U E_{K,I}U^{\otimes d}E_{J,L}{U}^{*\otimes d} d\mu(U)
=\sum_{R,M} \langle \int_U E_{K,I}U^{\otimes d}E_{J,L}{U}^{* \otimes d} d\mu(U), E_{R,M}\rangle E_{R,M}$, 
 and one notes that 
 \begin{align*}
 \langle \int_U e_{K,I}U^{\otimes d}e_{J,L}{U}^{* \otimes d} d\mu(U), 	
 e_{R,M} \rangle = 
 \int_U \langle  e_{K,I}U^{\otimes d}e_{J,L}{U}^{* \otimes d}, e_{R,M} \rangle d\mu(U)  = \\
 \langle \int_U U^{\otimes d}e_{J,L}{U}^{* \otimes d}, e_{I,K}e_{R,M} d\mu(U) \rangle  = 
 \langle \mathbb{E}(e_{J,L}), e_{I,M} \rangle \delta_{K,R}.
 \end{align*}
  Taking the trace gives the first equality. The second comes from decomposing $\mathbb{E}(e_{J,L})$ in terms of the basis $E_{T,S}^{\lambda}$ from the previous section.
 \end{proof}
 \subsection{Examples}
Now, we investigate a few examples. 

First we start with $\int |u_{1,1}|^2d\mu(U)$.
One has $\int |u_{1,1}|^2d\mu(U) = \langle \mathbb{E}(e_{1,1}), e_{1,1}\rangle = \langle \dfrac{1}{n}$Id$, 
e_{1,1}\rangle = \dfrac{1}{n}$.  

Next, we look at $\int |u_{1,1}|^4 d\mu(U)$.
Here, we have two matrix units in $\mathbb{C}[S_2]$, namely 
$E_T = \dfrac{1}{2}(1+(1,2))$ and $E_S = \dfrac{1}{2}(1-(1,2))$. 

In $M_n \otimes M_n$, these elements 
are 
$$\dfrac{1}{2}(\Id \otimes \Id+\sum_{i,j} e_{i,j}\otimes e_{j,i}),\dfrac{1}{2}(\Id \otimes \Id - \sum_{i,j} e_{i,j}\otimes e_{j,i}).$$ 
Therefore,
\begin{eqnarray*}
\int |u_{1,1}|^4d\mu(U)&=&\langle \mathbb{E}({e_{1,1}\otimes e_{1,1}}), e_{1,1}\otimes e_{1,1}\rangle \\
&=&||E_T||^{-2} \langle e_{1,1}\otimes e_{1,1}, E_T\rangle + ||E_S||^{-2} \langle e_{1,1}\otimes e_{1,1}, E_S\rangle\\
&=&  ||E_T||^{-2} \langle e_{1,1}\otimes e_{1,1},\dfrac{1}{2}(\Id \otimes \Id+\sum_{i,j} e_{i,j}\otimes e_{j,i})\rangle + \\
&& ||E_S||^{-2} \langle e_{1,1}\otimes e_{1,1}, \dfrac{1}{2}(\Id \otimes \Id-\sum_{i,j} e_{i,j}\otimes e_{j,i})\rangle.
\end{eqnarray*}

Next we calculate the quantities $||E_T||^{2}$ and $||E_S||^{2}$:

\begin{eqnarray*}
||E_T||^2 &=& \dfrac{1}{4}\langle \Id\otimes \Id + \sum_{i,j}e_{i,j}\otimes e_{j,i}, \Id\otimes \Id + \sum_{i,j}e_{i,j}\otimes e_{j,i} \rangle \\
&=&\dfrac{1}{4}(\langle \Id\otimes \Id, \Id\otimes \Id\rangle + 2\langle \Id\otimes \Id, \sum_{i,j}e_{i,j}\otimes e_{j,i}\rangle + \\
&& \sum_{i,j,k,l}\langle e_{i,j}\otimes e_{j,i}, e_{k,l} \otimes e_{l,k}\rangle )\\
&=& \dfrac{1}{4}(n^2 + 2n + n^2) = \dfrac{n(n+1)}{2}\\
\end{eqnarray*}

\begin{eqnarray*}
||E_S||^2 &=& \dfrac{1}{4}\langle \Id\otimes \Id - \sum_{i,j}e_{i,j}\otimes e_{j,i}, \Id\otimes \Id - \sum_{i,j}e_{i,j}\otimes e_{j,i} \rangle \\
&=&\dfrac{1}{4}(\langle \Id\otimes \Id, \Id\otimes \Id\rangle - 2\langle \Id\otimes \Id, \sum_{i,j}e_{i,j}\otimes e_{j,i}\rangle + \\
&& \sum_{i,j,k,l}\langle e_{i,j}\otimes e_{j,i}, e_{k,l} \otimes e_{l,k}\rangle )\\
&=& \dfrac{1}{4}(n^2 - 2n + n^2) = \dfrac{n(n-1)}{2}\\
\end{eqnarray*}

A final calculation shows that 
\begin{eqnarray*}
\langle e_{1,1}\otimes e_{1,1},\dfrac{1}{2}(\Id \otimes \Id+\sum_{i,j} e_{i,j}\otimes e_{j,i})\rangle =\\
 \dfrac{1}{2}(\langle e_{1,1}\otimes e_{1,1}, \Id\otimes \Id \rangle + \langle e_{1,1}\otimes e_{1,1}, \sum_{i,j} e_{i,j}\otimes e_{j,i}\rangle) = 
 \dfrac{1}{2}(1+1) = 1,\\
\end{eqnarray*}
and similarly, $\langle e_{1,1}\otimes e_{1,1},\dfrac{1}{2}(\Id \otimes \Id-\sum_{i,j} e_{i,j}\otimes e_{j,i})\rangle = \dfrac{1}{2}(1-1) = 0$. 

Hence we arrive at $\int |u_{1,1}|^4d\mu(U) = \dfrac{2}{n(n+1)}$.

\subsection{Integrating over Corners}
\label{subsec:corners}

Here, we consider the following problem: suppose than in the integrals from the previous section we 
assume that all indices $I,J,K,L \in \{1,\dots, k\}^d$ for $k \leq n$. 
We can prove the following:

\begin{theorem}
\label{theorem10}
Suppose that $I,J,K,L \in \{1,\dots,k\}^d$. 
Then we have the following:
$$\int_{U_n}u_{i_1j_1}\ldots u_{i_dj_d}\overline u_{k_1l_1}\ldots \overline u_{k_dl_d}d\mu (U) =  
\sum_{\lambda \vdash d ; S,T \in Stab(\lambda), l(\lambda) \leq \min{\{k,d\}}} \dfrac{\langle e_{J,L},E_{S,T}^{\lambda}\rangle \langle E_{S,T}^{\lambda},e_{I,K} \rangle}{||E_{S,T}^{\lambda}||^{2}}.$$ 
In other words, we need only sum over those $\lambda$ with $l(\lambda) \leq k$.

\end{theorem}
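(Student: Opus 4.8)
The plan is to derive Theorem \ref{theorem10} directly from Theorem \ref{theorem9} by showing that every summand indexed by a partition $\lambda$ with $l(\lambda) > k$ vanishes. Since one always has $l(\lambda) \le d$ for $\lambda \vdash d$, this leaves exactly the range $l(\lambda) \le \min\{k,d\}$, and the surviving families of summands coincide term by term with those in Theorem \ref{theorem9}. Hence it suffices to prove that the numerator $\langle e_{J,L}, E_{S,T}^{\lambda}\rangle \langle E_{S,T}^{\lambda}, e_{I,K}\rangle$ is zero whenever $l(\lambda) > k$, under the standing hypothesis $I,J,K,L \in \{1,\dots,k\}^d$.

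First I would observe that each of the two Hilbert--Schmidt pairings in a summand is a single matrix coefficient of the operator $E_{S,T}^{\lambda} \in M_n^{\otimes d}$: viewing $e_{I,K}$ as the rank-one operator $|e_I\rangle\langle e_K|$ on $(\bC^n)^{\otimes d}$, the pairing $\langle E_{S,T}^{\lambda}, e_{I,K}\rangle$ equals, up to conjugation, the entry $\langle e_I, E_{S,T}^{\lambda} e_K\rangle$, and likewise $\langle e_{J,L}, E_{S,T}^{\lambda}\rangle = \langle e_J, E_{S,T}^{\lambda} e_L\rangle$. Under the hypothesis the multi-indices $I,J,K,L$ all lie in $\{1,\dots,k\}^d$, so these coefficients only involve the ``corner'' of $E_{S,T}^{\lambda}$ supported on the subspace $(\bC^k)^{\otimes d} \subset (\bC^n)^{\otimes d}$. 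Writing $P_k$ for the orthogonal projection of $\bC^n$ onto $\mathrm{span}(e_1,\dots,e_k)$, both pairings are therefore determined by the compression $P_k^{\otimes d} E_{S,T}^{\lambda} P_k^{\otimes d}$, and it is enough to show this compression is zero when $l(\lambda) > k$.

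The key step is to identify this compression. By Theorem \ref{theorem4} we have $E_{S,T}^{\lambda} = p_n^d(\tilde E)$ for the abstract element $\tilde E = E_T^\lambda \sigma_{S,T} E_S^\lambda \in \bC[S_d]$, which lies in the block $\End(V_\lambda)$. Since each permutation operator $p_n^d(\sigma)$ merely permutes tensor legs and $P_k$ is the same projection on every factor, $P_k^{\otimes d}$ commutes with $p_n^d(\sigma)$; consequently $P_k^{\otimes d} p_n^d(\sigma) P_k^{\otimes d}$ is exactly $p_k^d(\sigma)$ acting on $(\bC^k)^{\otimes d}$, and by linearity $P_k^{\otimes d} E_{S,T}^{\lambda} P_k^{\otimes d} = p_k^d(\tilde E)$. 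Now I would invoke Schur--Weyl duality (Theorem \ref{thm:schur-weyl}) for the action of $U_k$ on $(\bC^k)^{\otimes d}$: the multiplicity of $V_\lambda$ is zero whenever $l(\lambda) > k$, so the block $\End(V_\lambda)$ lies in the kernel of $p_k^d$. As $\tilde E$ belongs to that block, $p_k^d(\tilde E) = 0$, the compression vanishes, both matrix coefficients vanish, and the summand is zero. The denominator $\|E_{S,T}^\lambda\|^2$ is computed in $M_n^{\otimes d}$ and is nonzero because $l(\lambda) \le n$, so dropping these terms raises no division issue.

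I expect the only genuine subtlety, and the step to state with care, to be the identification $P_k^{\otimes d} E_{S,T}^{\lambda} P_k^{\otimes d} = p_k^d(\tilde E)$: one must verify both that the corner projection really commutes with every permutation operator and that the two inner products see only this corner. Everything else is a direct appeal to Schur--Weyl duality, applied to $U_k$ exactly as it was applied to $U_n$ to restrict the original sum to $l(\lambda) \le n$ in Theorem \ref{theorem9}.
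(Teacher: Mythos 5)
Your proposal is correct and takes essentially the same route as the paper: both reduce to the fact that for $l(\lambda)>k$ the block $\End(V_\lambda)\subset \mathbb{C}_n[S_d]$ annihilates $(\mathbb{C}^k)^{\otimes d}$ — which the paper derives from the $S_d$-equivariance of the inclusion $(\mathbb{C}^k)^{\otimes d}\hookrightarrow(\mathbb{C}^n)^{\otimes d}$ and you derive from Schur--Weyl duality for $U_k$ — so that all pairings against $e_{I,K}$, $e_{J,L}$ with indices in $\{1,\dots,k\}^d$ vanish. Your compression identity $P_k^{\otimes d}\,p_n^d(x)\,P_k^{\otimes d}=p_k^d(x)$ is a clean repackaging of the paper's argument that $Be_I=0$ for $e_I\in(\mathbb{C}^k)^{\otimes d}$ and $Ae_I=0$ for the remaining basis vectors in the trace sum.
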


Recall that $p_n^d: \mathbb{C}[S_d] \rightarrow \End((\mathbb{C}^n)^{\otimes d})$ is injective on 
$\mathbb{C}_n[S_d] := \bigoplus_{l(\lambda) \leq n}\End(V_{\lambda})$, hence is injective when 
restricted to $\mathbb{C}_k[S_d]$ where $1 \leq k \leq n$. The integration question considered in this section 
can be reformulated in the following way: there is a natural embedding of $M_k^{\otimes d}$ in 
$M_n^{\otimes d}$ for $1 \leq k \leq n$, where we view a $k \times k$ matrix as the upper left corner of 
an $n \times n$ matrix, and extend this inclusion to tensors. 

\begin{proof}[Proof of Theorem \ref{theorem10}]
We need to show that the image of $M_k^{\otimes d}$ under conditional expectation 
$\mathbb{E} : M_n^{\otimes d} \rightarrow \mathbb{C}_n[S_d]$ is orthogonal to the sum of all 
$\End(V_\lambda)$ where $l(\lambda) > k$. In particular, if $A \in M_k^{\otimes d}$ and 
$B \in \End(V_{\lambda})$ for $l(\lambda) > k$ we have that $\langle A, B \rangle = 0$. 

First note that the inclusion $i: (\mathbb{C}^k)^{\otimes d} \rightarrow  (\mathbb{C}^n)^{\otimes d}$ 
commutes with the action of $S_d$, that is $p_n^d(\sigma)i(x) = i(p_k^d(\sigma)x)$ for 
$x \in (\mathbb{C}^k)^{\otimes d}$. From this it follows that $(\mathbb{C}^k)^{\otimes d}$ lies entirely 
in the irreducible component of $(\mathbb{C}^n)^{\otimes d}$ consisting of the $V_{\lambda}$ with 
$l(\lambda) \leq k$. In particular, if $x \in (\mathbb{C}^k)^{\otimes d}$ and $B \in \End(V_{\lambda}) \subset M_n^{\otimes d}$ 
with $l(\lambda) > k$ we have $Bx = 0$. But the inner product $\langle A, B \rangle$ can be written as 
$\sum_I \langle Ae_I, Be_I \rangle$. For $e_I \in (\mathbb{C}^k)^{\otimes d}$ we have $Be_I = 0$. 
Further, for any $e_I$ not in $(\mathbb{C}^k)^{\otimes d}$ we have $Ae_I = 0$, since $A$ lies in the span of 
those $e_{J,K} \in M_k^{\otimes d}$. Hence every term in this sum is $0$, and the inner product is $0$ as required. This completes the proof.
\end{proof}

The proof of theorem \ref{theorem10}, 
together with the fact that 
$\langle \mathbb{E}(e_{J,L}), e_{I,K} \rangle = \langle e_{J,L}, \mathbb{E}(e_{I,K}) \rangle$ 
shows that in the integral $\langle \mathbb{E}(E_{J,L}), E_{I,K} \rangle$ we need only sum over 
diagrams of length $k$ or less, where $k$ is the minimum of max$\{j_1, \dots, j_d, l_1, \dots, l_d \}$ 
and \\ max$\{i_1, \dots, i_d, k_1, \dots, k_d \}$. In this way we have simplified formulas for integrating 
over not just upper left hand squares in $U_n$,  but for integrating over upper left hand rectangles 
in $U_n$, in terms of the lengths of Young diagrams.

Note that in the orthogonal case, a formula was obtained in \cite{bcs2}. While there is no overlap since the integration group is different,
it is worth observing that this approach could be conducted to obtain
a conceptual proof for the paper \cite{bcs2}.

Applying Theorem \ref{theorem10}
to the case of one row,  we obtain the following formula:

\begin{theorem}
\label{theorem11}
Let $J = \{j_1, \dots, j_d \}$ and $L = \{l_1, \dots, l_d \}$ be arbitrary indices. Then the integral $\int_U u_{1, j_1} \dots u_{1, j_d} \overline{u_{1, l_1}} \dots \overline{u_{1, l_d}} d\mu(U)$ is equal to $\dfrac{k}{d!|E_T|^2}$ where $T$ is the unique standard filling of $\lambda = (d)$ and $k$ is the number of permutations in $S_d$ mapping $J$ to $L$.
\end{theorem}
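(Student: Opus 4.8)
The plan is to obtain Theorem \ref{theorem11} as a direct specialization of the general formula in Theorem \ref{theorem9}, using the corner reduction of Theorem \ref{theorem10} to collapse the sum to a single term. Here the creation indices and the annihilation row indices are all equal to $1$, so $I = K = (1,\dots,1)$ and $e_{I,K} = e_{1,1}^{\otimes d}$. Since $\max\{i_1,\dots,i_d,k_1,\dots,k_d\} = 1$, the asymmetric sharpening of Theorem \ref{theorem10} recorded in the remark following its proof shows that only diagrams $\lambda$ with $l(\lambda) \leq 1$ contribute, regardless of how large the indices $J$ and $L$ are. The only such diagram is the single row $\lambda = (d)$, which carries a unique standard filling $T$ (forcing $S = T$), so the entire sum in Theorem \ref{theorem9} reduces to the single term $\langle e_{J,L}, E_T\rangle\langle E_T, e_{I,K}\rangle / \|E_T\|^2$.

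Next I would identify $E_T$ explicitly. For $\lambda = (d)$ the minimal projection is the full symmetrizer, so as an element of $M_n^{\otimes d}$ one has $p_n^d(E_T) = \frac{1}{d!}\sum_{\sigma \in S_d} p_n^d(\sigma)$, the orthogonal projection onto the symmetric subspace. It then remains to evaluate the two Hilbert--Schmidt pairings against the standard matrix units. The key computation is the inner product of a single permutation operator with a rank-one tensor matrix unit: writing $p_n^d(\sigma) = \sum_{\vec a} e_{a_{\sigma^{-1}(1)}, a_1}\otimes\cdots\otimes e_{a_{\sigma^{-1}(d)},a_d}$ and using $\langle e_{i,j}, e_{k,l}\rangle = \delta_{ik}\delta_{jl}$ slot by slot, one finds $\langle p_n^d(\sigma), e_{1,1}^{\otimes d}\rangle = 1$ for every $\sigma$ (only $\vec a = (1,\dots,1)$ survives), whence $\langle E_T, e_{I,K}\rangle = \frac{1}{d!}\cdot d! = 1$.

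For the remaining pairing I would run the same slot-by-slot computation with $e_{J,L} = e_{j_1,l_1}\otimes\cdots\otimes e_{j_d,l_d}$. The column constraints force $\vec a = (l_1,\dots,l_d)$, and the row constraints then reduce to the condition $j_m = l_{\sigma^{-1}(m)}$ for all $m$; equivalently $l_p = j_{\sigma(p)}$, i.e.\ $\sigma$ carries $J$ to $L$. Hence $\langle e_{J,L}, p_n^d(\sigma)\rangle$ equals $1$ when $\sigma$ maps $J$ to $L$ and $0$ otherwise, so $\langle e_{J,L}, E_T\rangle = k/d!$ where $k$ counts these permutations. Substituting the two pairings into the single surviving term yields $\int = \frac{(k/d!)\cdot 1}{\|E_T\|^2} = \frac{k}{d!\,\|E_T\|^2}$, which is the claim.

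The computation is essentially bookkeeping, so the step I would be most careful about is the reduction to $\lambda = (d)$. One must invoke the asymmetric version of Theorem \ref{theorem10}, noting that it is the smallness of the $I,K$ indices alone (not those of $J,L$) that forces $l(\lambda) \leq 1$; and one must check that the permutation count arising from the delta constraints genuinely matches ``the number of permutations mapping $J$ to $L$'', including the degenerate case of repeated entries, where $k$ is either $0$ (if $J$ and $L$ differ as multisets) or a product of factorials of multiplicities.
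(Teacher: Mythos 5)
Your proposal is correct and follows essentially the same route as the paper, which obtains Theorem \ref{theorem11} by applying Theorem \ref{theorem10} (in the asymmetric form noted in the remark after its proof) to the single-row case, where $E_T = \frac{1}{d!}\sum_{\sigma} p_n^d(\sigma)$ is the symmetrizer. Your slot-by-slot evaluation of $\langle e_{J,L}, E_T\rangle = k/d!$ and $\langle E_T, e_{1,1}^{\otimes d}\rangle = 1$ simply makes explicit the ``simple calculation'' the paper leaves to the reader, including the correct handling of repeated indices.
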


Note that when $d = 2$ and $J = L = (1,1)$ we recover the formula from the calculation of $\int_U |u_{1,1}|^4d\mu(U)$ on the previous page. 

Let us mention a related formula that was known for $\int_U u_{1, j_1} \dots u_{1, j_d} \overline{u_{1, l_1}} \dots \overline{u_{1, l_d}} d\mu(U)$
(cf \cite{mat}, Proposition 2.4 for a direct proof with Weingarten calculus). 
Our proof in this paper is new and has the potential for generalization to more than one row.
Let us outline below yet an other proof, of probabilistic nature:
 
First, observe that with probability one, the random vector  $(u_{1,1},\ldots ,u_{1,n})$ has the same distribution
as $(X_1/(\sum_{i=1}^d |X_i|^2)^{1/2}, \ldots, X_d/(\sum_{i=1}^d |X_i|^2)^{1/2})$, where $(X_1,\ldots ,X_d)$ are i.i.d. standard complex valued 
gaussians distributions. Further, $\sum_{i=1}^n |X_i|^2$ is independent from 
$(X_1/\sum_{i=1}^d |X_i|^2, \ldots, X_d/\sum_{i=1}^d |X_i|^2)$. Putting this together, 
we obtain the following formula (cf \cite{mat}, Proposition 2.4):

$$\int_U u_{1, j_1} \dots u_{1, j_d} \overline{u_{1, l_1}} \dots \overline{u_{1, l_d}} d\mu(U)=\frac{\prod_i r_i!}{n(n+1)\ldots (n+d-1)},$$
where $r_1,r_2\ldots $ are the number of elements in the blocks induced by the partition $i\to j_i$ assuming that these numbers
are the same up to permutation, that is if $i\to j_i$ is replaced by $i\to l_i$.  To make this more precise, we say that two $d-$indices 
$K$ and $J$ containing values in $\{1,\dots,n\}$ are \emph{of the same type} if each index set contains the same number of occurances of each of 
$1, 2, \dots, n$. For example, the indices $(1,1,2,2,5,5,5)$ and $(5,1,2,1,2,5,5)$ are of the same type, while $(1,1,2,2,5,5,5)$ 
and $(1,2,2,2,5,5,5)$ are not. Equivalently, $I$ and $J$ are of the same type iff one can be obtained by another by a permutation 
$\sigma \in S_d$. This gives an equivalence relation of the set $[n]^{[d]}$ of functions 
$I: \{1, \dots, d \} \rightarrow \{1, \dots, n \}$. The proposition above says that if $J$ and $L$ lie in different equivalence classes, 
the integral $\int_U u_{1, j_1} \dots u_{1, j_d} \overline{u_{1, l_1}} \dots \overline{u_{1, l_d}} d\mu(U)$ is zero, otherwise we get 
the formula above. Reformulating this using  theorem \ref{theorem11} we get the following:

\begin{corollary}
\label{corollary1}
If two indices $J$ and $L$ are not of the same type, then the integral 
$\int_U u_{1, j_1} \dots u_{1, j_d} \overline{u_{1, l_1}} \dots \overline{u_{1, l_d}} d\mu(U)$ is zero.
\end{corollary}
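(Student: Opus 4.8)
The plan is to read off the corollary directly from Theorem~\ref{theorem11}. That theorem evaluates the integral as $k/(d!\,|E_T|^2)$, where $T$ is the unique standard filling of $\lambda=(d)$ and $k$ counts the permutations $\sigma\in S_d$ sending $J$ to $L$. The denominator $d!\,|E_T|^2$ is a fixed nonzero scalar, so the integral vanishes if and only if $k=0$, and the whole task reduces to checking that $k=0$ exactly when $J$ and $L$ are not of the same type.

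For this I would spell out the equivalence, already recorded in the discussion preceding the corollary, between $J$ and $L$ being of the same type and $J$ and $L$ being related by a permutation. If, for every value $p\in\{1,\dots,n\}$, the positions $m$ with $j_m=p$ are equal in number to the positions $m$ with $l_m=p$, then one may match these positions value by value to produce a $\sigma\in S_d$ with $\sigma J=L$, so $k\geq 1$. Conversely, any $\sigma$ with $\sigma J=L$ merely permutes positions and hence preserves the multiplicity of each value, so $J$ and $L$ must be of the same type. Taking the contrapositive, if $J$ and $L$ are not of the same type then no such $\sigma$ exists, i.e. $k=0$, and Theorem~\ref{theorem11} gives $0/(d!\,|E_T|^2)=0$.

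As an independent confirmation of a more conceptual flavour, I would also note a symmetry argument. Right-multiplying $U$ by the diagonal unitary $D=\mathrm{diag}(e^{i\theta_1},\dots,e^{i\theta_n})$ leaves the Haar measure invariant and sends each $u_{1,j_m}$ to $e^{i\theta_{j_m}}u_{1,j_m}$ and each $\overline{u}_{1,l_m}$ to $e^{-i\theta_{l_m}}\overline{u}_{1,l_m}$. Hence the integral equals itself multiplied by the phase $\exp(i\sum_m\theta_{j_m}-i\sum_m\theta_{l_m})$ for every choice of angles $\theta$. If $J$ and $L$ are not of the same type, some value $p$ occurs with unequal multiplicities $a\neq b$; taking $\theta_p$ small and nonzero and all other angles zero makes this phase $e^{i(a-b)\theta_p}\neq 1$, and an integral equal to itself times a nontrivial phase must vanish.

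I do not expect a genuine obstacle here: once Theorem~\ref{theorem11} is in hand the statement is essentially definitional. The only point requiring care is the counting equivalence of the second paragraph---that being of the same type coincides exactly with the existence of a relating permutation---since the entire content of the corollary lies in this identification that forces $k=0$.
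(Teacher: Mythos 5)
Your proposal is correct and follows essentially the same route as the paper, which derives the corollary directly from Theorem~\ref{theorem11} via the observation (already made in the surrounding text) that $J$ and $L$ are of the same type precisely when some $\sigma\in S_d$ sends $J$ to $L$, so that otherwise $k=0$ and the integral vanishes. Your supplementary phase-invariance argument using a diagonal unitary is a valid independent verification, but it is not part of the paper's reasoning and is not needed.
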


Another formulation of the integral in Theorem \ref{theorem11} is that 

$$ \int_U u_{1, j_1} \dots u_{1, j_d} \overline{u_{1, l_1}} \dots \overline{u_{1, l_d}} d\mu(U) =  \dfrac{d!\sum_{\sigma}\delta_{\sigma J, L}}{\sum_{\sigma, \beta} \sum_I \delta_{\sigma I, \beta I}}$$

This follows by a simple calculation of $|E_T|^2 = \langle E_T, E_T \rangle$ for $E_T = \dfrac{1}{d!} \sum_{\sigma} \sum_I E_{\sigma I , I} \in M_n^{\otimes d}$.

Finally, note that a similar analysis could be performed for two rows or more. The notation to obtain a closed formula is already
quite cumbersome at that level and will be studied elsewhere.
Note that similar results were obtained in the orthogonal case by \cite{bcs2}. 

\section*{ Acknowledgements}

T.C.'s research was supported by NSERC and OGS fellowships.
T.C. and B.C. were supported by NSERC discovery grant and an ERA grant.
B.C. was partly supported by AIMR funding.

We would like to thank Sho Matsumoto, Jean-Marc Schlenker, and Teo Banica for their suggestions of improvements on a preliminary
version of this paper.  

\bibliographystyle{alpha}
\bibliography{biblio}

\end{document}